\newtheorem{theorem}{Theorem}[section]
\newtheorem{lemma}[theorem]{Lemma}
\newcommand{\mat}[1]{\ensuremath{\begin{bmatrix}#1\end{bmatrix}}}
\DeclareMathOperator*{\ext}{ext}
\useunder{\uline}{\ul}{}
\title{\LARGE \bf
iRiSC: Iterative Risk Sensitive Control for Nonlinear Systems with Imperfect Observations
}
\author{Bilal Hammoud$^{1,2}$, Armand Jordana$^{1}$ and Ludovic Righetti$^{1,2}$

\thanks{*This work was supported by New York University, the European Union’s Horizon 2020 research and innovation program (grant agreement 780684) and the National Science Foundation (grants 1825993, 1932187, 1925079 and 2026479).}
\thanks{$^{1}$Tandon School of Engineering,
        New York University, USA {\tt\small bah436,aj2988,lr114@nyu.edu}}%
\thanks{$^{2}$Max Planck Institute for Intelligent Systems, Tübingen, Germany}
}
\begin{document}

\maketitle
\thispagestyle{empty}
\pagestyle{empty}

\begin{abstract}
This work addresses the problem of risk-sensitive control for nonlinear systems with imperfect state observations, extending results for the linear case.
In particular, we derive an algorithm that can compute local solutions with computational complexity similar to the iterative linear quadratic regulator algorithm. The proposed algorithm introduces feasibility gaps to allow the initialization with non-feasible trajectories. Moreover, an approximation for the expectation of the general nonlinear cost is proposed to enable an iterative line search solution to the planning problem. The optimal estimator is also derived along with the controls minimizing the general stochastic nonlinear cost. Finally extensive simulations are carried out to show the increased robustness the proposed framework provides when compared to the risk neutral iLQG counter part. To the authors' best knowledge, this is the first algorithm that computes risk aware optimal controls that are a function of both the process noise and measurement uncertainty. 
\end{abstract}


\section{Introduction}

Recently there has been an increased interest in computing optimal decisions that reason not only about the mean of a certain outcome or cost~\cite{sideris2005efficient, todorov2005generalized} but also about the higher order statistics of the problem. A particular class of such algorithms is known as risk sensitive optimal control, which takes into account the variability of a certain cost and is concerned with the infrequent occurrences of undesired events besides the frequent most common outcomes. Risk sensitive problems have been widely used in economics~\cite{arrow1971theory,pratt1978risk} and have recently gained popularity in robotics~\cite{hammoud2021impedance, majumdar2020should} and reinforcement learning \cite{Bechtle_Lin_Rai_Righetti_Meier_2019}. 

In linear quadratic Gaussian (LQG) control~\cite{bryson2018applied}, the expectation of a quadratic cost function $\mathbb{E}\left[\mathcal{L}\right]$ is minimized over the decision variables. The main result in LQG theory, is that the optimal control is the same as that of the deterministic linear quadratic regulator (LQR) case. This is not the case anymore in the linear risk-sensitive control case with noisy measurements.
%
%
%

The risk-sensitive optimal control problem of interest in this paper was introduced by Jacobson~\cite{jacobson1973optimal} which instead minimizes the cost
\begin{align}
    \mathcal{J} = - \sigma^{-1} \ln{\mathbb{E}\left[e^{-\sigma \mathcal{L}} \right]} 
\end{align}
where $\sigma$ is a scalar whose role will become clear later. Under this particular transformation, and in the limit as $\sigma \to 0$ this transformed cost can be approximated as 
\begin{align}
    \mathcal{J} \approx \mathbb{E}\left[\mathcal{L}\right] - \frac{\sigma}{2} Var\left[\mathcal{L}\right] + \ldots 
\end{align}
where $\ldots$ contains terms that are functions of higher order moments of $\mathcal{L}$. For $\sigma < 0$ (risk-averse case) the variability of the original cost $\mathcal{L}$ contributes positively to the newly transformed cost, thus an optimizer will seek a solution reducing the variability of the original cost and minimizing the possibility of infrequent events occurring. The opposite is true for the case of $\sigma > 0$ (risk-seeking case). Other forms of risk sensitivity have been studied~\cite{singh2018framework, tsiamis2020risk} and will not be considered in this paper. The solution for the stochastic optimal control problem under the exponential transformation given linear process model with additive Gaussian noise and a quadratic cost which we shall denote LQEG was derived by Jacobson~\cite{jacobson1973optimal}.

The case with noisy measurements (and no direct access to the state) was studied in Speyer~\cite{speyer1974optimization}, who introduced a solution that grows with the history of observations for linear process and measurement models. The problem involving linear process and measurement models was eventually solved satisfactorily by Whittle~\cite{whittle1981risk} for the quadratic cost case. Variations of this problem were later studied either for the case of partially observable systems~\cite{bensoussan1985optimal, james1994risk} or the nonlinear case with full state knowledge~\cite{roulet2020convergence}. Ponton~\cite{ponton2020effects} proposed an iterative algorithm to solve the case with measurement noise that requires augmenting the true state dynamics with the dynamics of an Extended Kalman Filter. However the derivation was heuristic with an incorrect line-search step which led to difficult to converge iterations and an incorrect update of the feedforward control.


This work aims to provide the first iterative algorithm
that computes a locally optimal solution to the nonlinear risk-sensitive optimal control problem with measurement uncertainty.
We extend the results of Whittle~\cite{whittle1981risk} to the general nonlinear case of process and measurement dynamics with additive Gaussian noise.
We propose a two-stage algorithm, where the first stage computes a nominal trajectory given the observations available at the starting point using an efficient line-search method. The second stage computes the optimal estimator along with the control optimizing the risk sensitive cost function as measurements are made during execution.
Finally, extensive simulations of a nonlinear dynamical system involving stiff interactions with the environment demonstrate the capability of such controllers.

\textbf{Notation:} The gradient of a function $f$ with respect to a vector $v$ is denoted as $f^v$, similarly for second order derivatives w.r.t vectors $u,v$ will be denoted as $f^{uv}$. The determinant of a matrix $M$ will be denoted by $\vert M \vert$. All functions are assumed to be $\mathcal{C}^2$ unless otherwise stated. If $(v_i)_{i\in \mathbb{N}}$ is a sequence of vectors, then $v_{k:t}$ denotes the batch vector of all $v_i$ for $k \leq i \leq t$. 
\section{background}\label{sec:background}

This section gives an overview of the Linear Quadratic Exponential Gaussian problem (LQEG), where both process and measurement models are linear and the cost is an exponential of a quadratic in the state and control variables. We go over the main results from Jacobson~\cite{jacobson1973optimal} and Whittle \cite{whittle1981risk} which we will build upon for our extension to the general nonlinear problem. Consider the following discrete time process and observation models, 
\begin{subequations}
\begin{align}
    x_{t+1} &= f^x_t x_t + f^u_t u_t + \omega_{t+1} \\
    y_{t+1} &= h^x_t x_t + \gamma_{t+1}
\end{align}
\end{subequations}
where $x_t$, $y_t$ and $u_t$ are the state, observation and control at time $t$ respectively. Given a belief $\hat x_0$, the initial state is assumed to follow a Gaussian distribution  $x_0 \sim \mathcal{N}\left(\hat{x}_0, \chi_0 \right)$. The process and measurement disturbances are considered to be Gaussian and denoted by $\omega_{t} \sim \mathcal{N}\left(0, \Omega_t\right)$ and $\gamma_{t} \sim \mathcal{N}\left(0, \Gamma_t\right)$.
Then, given control inputs (or a policy), the joint probability density of the entire trajectory can be written
\begin{align}
     p\left(x_0, w_{1:T}, \gamma_{1:T}\right) &= \frac{1}{\kappa} \exp\left(- \mathcal{D} \right) \\
    \mbox{where} \ \ \ \mathcal{D} &= d_0 + \sum_{t=1}^{T} d_t \nonumber \\
     d_0 &= \frac{1}{2}(x_0 -\hat{x}_0)^T \chi^{-1}_{0} (x_0 -\hat{x}_0) \nonumber \\ 
    d_t &= \frac{1}{2}\omega_t^T \Omega_t^{-1} \omega_t + \frac{1}{2} \gamma_t^T \Gamma_t^{-1} \gamma_t \nonumber \\
     \kappa &= \vert 2 \pi \chi_{0} \vert ^{\frac{1}{2}}  \prod_{t=1}^T \vert 2 \pi \Omega_t \vert ^{\frac{1}{2}} \vert 2 \pi \Gamma_t \vert ^{\frac{1}{2}} \nonumber 
\end{align}

 Observations available at time $t$ are denoted by $W_t$ and take the form 
\begin{equation}
W_t = \left(\hat{x}_0, y_{1:t}, u_{0:t-1}\right). \label{eqn:observation-vector}
\end{equation}
It is important to note that the observation history also includes $ \hat{x}_0 $ the belief of the initial state.
Then, the risk-sensitive optimal control problem of the imperfectly observable system at time~$t$ becomes a problem of computing the optimal policy $\pi$ to minimize the following cost function
\begin{align}
    \mathcal{J}(\pi, W_t) &= - \sigma^{-1} \ln \left( \mathbb{E}_\pi \left[ \exp\left(-\sigma \mathcal{L}\right) \ \vert \ W_t   \right] \right) \label{eqn:cost_definition}  \\ 
    \mbox{where} \ \mathcal{L}  &= \frac{1}{2}  x_T^T l^{xx}_T x_T + \frac{1}{2} \sum_{t=0}^{T-1} x_t^T l^{xx}_t x_t + u_t^T l^{uu}_t u_t \nonumber \label{eqn:functional} 
\end{align}
where the expectation is taken over every $w_t$, $\gamma_t$ and $x_0$. Taking the expectation conditioned on $W_t$ determines $ u_{0:t-1}$, however, control inputs, $ u_{t:T}$, are taken according to the policy $\pi$. We consider policies that only depend on previous observations. Before time~$t$, observations are determined by $W_t$ but after time~$t$, the observations are random variable therefore future control inputs are also random variables. The optimal policy is then a function of the current available observations
\begin{equation}
    \pi^{\star}(W_t) = \underset{\pi}{\operatorname{argmin}} \,\, \mathcal{J}(\pi, W_t) \label{eqn:pi_star_definition}
\end{equation}

\subsection{Optimal Control with Imperfect Observations}

The expectation at time $t=0$ of the total cost of the control problem can be expressed as 
\begin{align}
    \mathbb{E}_{\pi}\left[e^{-\sigma \mathcal{L}}\right \vert W_0] &= \int e^{-\sigma \mathcal{L}} p\left(x_0, w_{1:T}, \gamma_{1:T}\right) dx_0 dw_{1:T}  d\gamma_{1:T} \nonumber  \\ 
    &= \frac{1}{\kappa} \int e^{-\sigma \left(\mathcal{L} + \sigma^{-1} \mathcal{D}\right)}  dx_0 dw_{1:T}  d\gamma_{1:T} \label{eqn:cost-expectation-definition}
\end{align}
Defining the total stress as $\mathcal{S} = \mathcal{L} + \sigma^{-1} \mathcal{D}$. 
Whittle then uses the quadratic lemma introduced by Jacobson~\cite{jacobson1973optimal} and summarized in Appendix~\ref{lemma:integration_eofq} in order to compute the expectation in~\eqref{eqn:cost-expectation-definition}. The main idea is that an integration over an exponential of a quadratic of a variable $x$ can be replaced by an optimization over the exponential of a quadratic of this variable. Then using induction, Whittle \cite{whittle1981risk} proves that  

\begin{align}
    \mathbb{E}_{\pi^{\star}} \left[ e^{\left(-\sigma \mathcal{L}\right)} \ \vert \ W_t   \right] &= \frac{\alpha_t}{p(W_t)} e^{\left( - \sigma \mathcal{S}_t(W_t) \right)}
\end{align}
where $\alpha_t$ is provided in appendix~\ref{app:principle_of_optimality} and
\begin{subequations}
\begin{align}
    \mathcal{S}_t(W_t) &= \min_{u_t} \ext_{y_{t+1}} \mathcal{S}_{t+1}(W_{t+1}) \\
    \mathcal{S}_T(W_T) &= \ext_{x_0, ..., x_T} \mathcal S \label{eqn:req_S}
\end{align}    
\end{subequations}
Here $\ext$ denotes an extremization, i.e. computation of either a maximum or a minimum. 
We say that a variable extremizes $\mathcal{S}$ if it minimises $\mathcal{S}$ when $\sigma < 0$ and  maximizes $\mathcal{S}$ when $\sigma > 0$.
This is due to the fact that the total stress could possess either a maximum or a minimum in the random variables depending on the sign of $\sigma$. 
Obtaining the optimal policy then boils down to the following minimization
\begin{align}
  \min_{u_t, ..., u_{T-1}} \ext_{x_0, ..., x_T}\ext_{y_{t+1}, ..., y_T} \mathcal S  
\end{align}    

The optimal $u_t$ is then the optimal control at time~$t$ given the current observations. Whittle's certainty equivalence principle says that the order of the successive extremization and minimization can be interchanged. If $\sigma$ is negative, then the minimization problem is well defined only if $\mathcal S $ is negative definite in the extremizing variables. This condition ensures that the optimal cost is finite. More precisely, there exist a threshold value $\Bar{\sigma} < 0 $ depending on the problem parameters such that for all $\sigma > \Bar{\sigma}$, the problem \ref{eqn:pi_star_definition} is well defined.
The implications of this condition on $\sigma$ will become obvious later.

We provide a brief proof of these results in Appendix~\ref{app:principle_of_optimality}. Unlike the iterative linear quadratic Gaussian iLQG~\cite{todorov2005generalized}, or the fully observable case of LQEG~\cite{jacobson1973optimal}, the optimal control cannot be obtained with a single backward recursion in a dynamic programming fashion. This is due to the fact that the terminal condition~\eqref{eqn:req_S} includes an optimization over the entire state trajectory. 

\subsection{Separation principle}

As Whittle concluded, at time $t$ the problem includes minimizing over future controls, and extremizing over future observations and the entire state trajectory. Whittle~\cite{whittle1981risk} then suggested breaking the problem into two sub problems while keeping the current state $x_t$ as a free variable, thus introducing the past stress $\mathcal{P}\left(x_t, W_t\right)$ and future stress $\mathcal{F}\left(x_t\right)$ at each time $t$\footnote{Note that $\mathcal{P}$ and $\mathcal{F}$ should be index by $t$, but in order to simplify the notations, we omit this time index as the latter can be inferred thanks to~$x_t$.}, which are defined by  
\begin{align}
    \mathcal{P}\left(x_t, W_t\right) &= \underset{x_{0:t-1}}{\ext} \  \sum^{t-1}_{i=0} l_i + \sigma^{-1} \sum^{t}_{i=0} d_i, \label{past_stress_reccursion} \\ 
    \mathcal{F}\left(x_t\right) &= \min_{u_{t:T-1}} \ \underset{y_{t+1:T}}{\underset{x_{t+1:T}}{\ext}} \ \ \sum^{T}_{i=t} l_i + \sigma^{-1} \sum^{T}_{i=t+1} d_{i} .
\end{align}

The past and future stress problems can then be computed sequentially separately as 
\begin{align}
    \mathcal{P}\left(x_t, W_t\right) &= \underset{x_{t-1}}{\ext} \ \ l_{t-1} + \sigma^{-1} d_{t} +  \mathcal{P}\left(x_{t-1}, W_{t-1}\right), \\
    \mathcal{F}\left(x_t\right) &= \min_{u_t} \underset{x_{t+1}}{\ext} \ \  l_{t} + \sigma^{-1} d_{t+1} +  \mathcal{F}\left(x_{t+1}\right) ,
\end{align}
where Whittle shows that optimizing over future observations $y_{t+1:T}$ yields a prediction of the form $y_{t+1} = h^x_t x_t$. In other words, for all $ h > t+1$, $\gamma_t = 0$. The recursions have the following boundary conditions:

\begin{align}
  \sigma   \mathcal{P}(x_0, W_0) &= \frac{1}{2}(x_0 -\hat{x}_0)^T \chi^{-1}_0 (x_0 -\hat{x}_0), \\
    \mathcal{F}(x_T) &=  \frac{1}{2} x_T^T l^{xx}_T x_T. 
\end{align}

As the the current state $x_t$ has been kept as a free variable in optimization, the optimal control can be written as a function of $x_t$: $u_t = \pi_{x_t} (W_t)$. $x_t$ is then determined by the last extremization
\begin{align}
    \check{x}_t = \underset{x_t}{\operatorname{argmin}} \ \ \mathcal{P}\left(x_t, W_t\right)  + \mathcal{F}\left(x_t\right)
\end{align}
The solution $\check{x}_t$ of this extremization then gives us the optimal control: 
$u_t^{\star} = \pi_{\check{x}_t} (W_t) = \pi^{\star} (W_t)$. 
This means that the optimal control is not obtained by only solving a backward recursion, but instead by optimizing an entire trajectory of states, while the policy as a function of these optimal states is obtained during the backward recursion. We will appeal next to these results to deriving an iterative algorithm for the nonlinear case.

\section{Algorithm Overview}\label{sec:overview}
To solve the nonlinear problem, we propose an algorithm consisting of two stages. The first stage (discussed in details in Sec.~\ref{sec:search_algorithm}) optimizes a nominal trajectory $\mathcal{X}, \mathcal{U}$ and a policy $\pi(\delta \check{x}_t)$ where $\delta \check{x}_t$ is the deviation of the state minimizing the total stress $\mathcal{S}$ from the nominal trajectory. The second stage (discussed in Sec.~\ref{sec:estimation}) is executed at run-time as measurements are made available and includes computing the minimum stress deviations $\delta \check{x}_t$ along with the corresponding optimal policy.  

Then the algorithm proceeds at follows, first, before run-time, the nominal trajectory and the policy $\pi$ are optimized according to Algorithm~\ref{alg:planning} until convergence. Once Algorithm~\ref{alg:planning} has converged, the reference trajectory $\mathcal{X}$ along with the policy parameters $k_t$ and $K_t$ and the future stress Hessian and gradient along the trajectory $V_t$ and $v_t$ are stored for the second stage. 

The second stage happens at run-time. As each observations $y_t$ becomes available, the past stress is propagated one time step forward as described in Algorithm~\ref{alg:estimation} to obtain $\hat{x}_t$ and $P_t$, then the minimum stress estimate $\check{x}_t$ at time $t$ is computed, and the optimal control is obtained and executed.   

\section{Nominal Trajectory Optimization}\label{sec:search_algorithm}

Consider the general nonlinear process and measurement models with independent identically distributed (i.i.d) additive Gaussian noise $\omega_t$ and $\gamma_t$

\begin{subequations}
\begin{align}
    x_{t+1} &= f_t \left(x_t, u_t\right) + \omega_{t+1}  \\ 
    y_{t+1} &= h_t\left(x_t\right) + \gamma_{t+1}
\end{align}
\end{subequations}
where $f_t$ and $h_t$ are differentiable. Along with the general nonlinear cost function of the form 
\begin{align}
    \mathcal{L} = l_T\left( x_T\right) + \sum_{0}^{T-1} l_t\left( x_t, u_t\right) \label{eqn:functional} 
\end{align}
where $l_t\left( x_t, u_t\right)$ is any twice differentiable nonlinear function of the states and controls.
With this cost, the minimization \eqref{eqn:pi_star_definition} generally leads to an intractable optimal control problem that might possess multiple local minima~\cite{chapman2021classical}. A common approach to find one such local minimum is to follow an iterative approach similar to that of iterative linear quadratic regulator $\cite{sideris2005efficient}$ and perform a line search on the cost function~\cite{nocedal2006numerical} until some convergence criteria is achieved. So, we seek some iterative updates to the state and control trajectories in the form 

\begin{align}
    \mat{\mathcal{X} \\ \mathcal{U}}^{i+1} = \mat{\mathcal{X} \\ \mathcal{U}}^{i} + \alpha \mat{ \delta \mathcal{X} \\  \delta \mathcal{U}}^{i} \label{eqn:nominal_update}
\end{align}

$\mathcal{X}$ and $\mathcal{U}$ denote the nominal state and control trajectories at the indicated iterations respectively. The $\delta$ is used to denote a small change around these trajectories and $\alpha$ is the step length corresponding to the search direction $\delta \mathcal{X}, \delta \mathcal{U}$ and determined via line search.

\subsection{Step Direction}

In order to generate a descent direction for the line search a linear approximation of the dynamics and observations along the nominal trajectory is computed and a quadratic approximation of the cost is considered.  These approximations are computed along the nominal trajectory at iteration $i$ denoted by $\mathcal{X}^i , \mathcal{U}^i$ and are given by  
\begin{subequations}
\begin{align}
    x_{t+1}  &\approx f_t(x_t^{i}, u_t^{i}) + f^x_t \delta x_t + f^u_t \delta u_t + \omega_{t+1}\\
    y_{t+1} &\approx h_t( x_t^{i}) + h^x_t \delta x_t +  \gamma_{t+1}
\end{align}
\end{subequations}
and
\begin{align}
    l_t\left(x_t^{i} + \delta x_t, u_t^{i} + \delta  u_t\right) \approx l_t\left(x_t^{i} , u_t^{i} \right) + \delta l_t
\end{align}
where $ \delta x_t = x_t - x_t^{i}$,  $ \delta u_t = u_t - u_t^{i}$ and $ \delta y_t = y_t - h_t( x_t^{i})$. In the remainder of the paper we will omit the dependence of $f(x,u)$ and $l(x,u)$ on $x$ and $u$ for brevity. The quadratic approximation of the cost is then given by  
\begin{align}
   \delta  l_t &= \frac{1}{2} \mat{1 \\  \delta x_t \\ \delta  u_t}^T \mat{0 & l^{x^T}_t & l^{u^T}_t \\ l^{x}_t & l^{xx}_t & l^{xu}_t \\ l^{u}_t & l^{ux}_t & l^{uu}_t  } \mat{1 \\ \delta x_t \\ \delta u_t} \label{eqn:cost_approx}
\end{align}

Following~\cite{giftthaler2018family} we introduce feasibility gaps along the nominal trajectory to form 
\begin{align}
    \Bar{f}_{t+1} = f(x_t^{i}, u_t^{i}) - x_{t+1}^{i}
\end{align}

Hence, we optimize $\mathbb{E}_\pi \left[ e^{-\sigma \delta \mathcal{L}}\right] $ subject to the process and observation deviation dynamics 
\begin{subequations}\label{eqn:dynamics_approx}
\begin{align}
   \delta  x_{t+1} &= f^x_t  \delta x_t + f^u_t \delta  u_t + \Bar{f}_{t+1}  +  \omega_{t+1} \label{eq:discrete_linearized_dyamics}\\ 
    \delta y_{t+1} &= h^x_t  \delta x_t +  \gamma_{t+1}
\end{align}
\end{subequations}

The optimal search direction at iteration $i$ is obtained by computing the optimal control deviation $\delta u^\star_t$ according to the future stress. We provide an alternative derivation to that of Whittle's in an extended document provided along with the code in~\footnote{\url{https://github.com/hammoudbilal/irisc}}. The document proves that the future stress optimization is independent of the future observations. The optimal control deviations are then given by the following theorem
\begin{theorem}\label{thm:optimal-controls}
Assuming that $\mathcal{F}(\delta x_{t+1})$ has the form 

\begin{align}
    \mathcal{F}(\delta x_{t+1}) = \frac{1}{2} \delta x^T_{t+1}V_{t+1} \delta x_{t+1} + {\delta x}^T_{t+1} v_{t+1} +  \Bar{v}_{t+1} 
\end{align}

Then the optimal control deviations $\delta u^\star_t$ minimizing $\mathbb{E}_\pi \left[ e^{-\sigma \delta \mathcal{L}} \vert W_0\right] $ are given recursively by  

\begin{align}
    \delta u^{\star}_t &= - \underbrace{Q^{uu^{-1}}_t Q^{u}_t}_{k_t} - \underbrace{Q^{uu^{-1}}_t Q^{ux}}_{K_t} \delta x_t  \label{eqn:optimal-control-deviation}
\end{align}

where $Q^u_t$, $Q^{uu}_t$ and $Q^{ux}_t$ are defined in~\eqref{eqn:Q_function}. \\ 
\end{theorem}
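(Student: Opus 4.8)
The plan is to establish the feedback law by a backward dynamic-programming recursion for the future stress on the linearized models \eqref{eqn:dynamics_approx} and the quadratic cost surrogate \eqref{eqn:cost_approx}, in the style of iLQR/DDP. Recall from the separation principle that the future stress satisfies $\mathcal{F}(\delta x_t) = \min_{\delta u_t} \ext_{\delta x_{t+1}}\big[\,\delta l_t + \sigma^{-1} d_{t+1} + \mathcal{F}(\delta x_{t+1})\,\big]$, and that extremizing the future block over the future observations removes the measurement noise, so that $d_{t+1} = \tfrac12\,\omega_{t+1}^{T}\Omega_{t+1}^{-1}\omega_{t+1}$ with $\omega_{t+1} = \delta x_{t+1} - f^x_t\delta x_t - f^u_t\delta u_t - \bar f_{t+1}$ read off from \eqref{eq:discrete_linearized_dyamics}. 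I would run the induction backward in $t$ with the quadratic form of $\mathcal{F}$ as hypothesis; the base case $t=T$ is immediate from \eqref{eqn:cost_approx}, giving $V_T = l^{xx}_T$ and $v_T = l^{x}_T$ (the constant $\bar v_T$ being irrelevant to the control law).

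For the inductive step, I would substitute the hypothesized quadratic $\mathcal{F}(\delta x_{t+1})$ and the expression for $\omega_{t+1}$ into the bracket, so that it becomes a quadratic in $(\delta x_t,\delta u_t,\delta x_{t+1})$. The first operation is the extremization over $\delta x_{t+1}$: since the integrand underlying $\mathbb{E}_\pi[e^{-\sigma\delta\mathcal L}\vert W_0]$ is an exponential of a quadratic in $\delta x_{t+1}$, Jacobson's quadratic lemma in Appendix~\ref{lemma:integration_eofq} lets one replace the Gaussian integral over the process noise by the extremization and, at the same time, certifies that the extremum is attained and finite precisely when the Hessian block in $\delta x_{t+1}$ (which couples $V_{t+1}$ with $\sigma^{-1}\Omega_{t+1}^{-1}$) has the sign required for convergence --- this definiteness requirement is exactly the origin of the threshold $\bar\sigma$. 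Solving the resulting linear stationarity equation for $\delta x_{t+1}$ and substituting back produces a quadratic form in $(\delta x_t,\delta u_t)$, and reading off its blocks --- together with the linear and constant contributions carried by $\bar f_{t+1}$, $v_{t+1}$ and $\bar v_{t+1}$ --- gives precisely the quantities $Q^{x}_t, Q^{u}_t, Q^{xx}_t, Q^{xu}_t, Q^{ux}_t, Q^{uu}_t$ collected in \eqref{eqn:Q_function}.

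The remaining operation is the minimization over $\delta u_t$ of this $Q$-form, which is quadratic in $\delta u_t$ with Hessian $Q^{uu}_t$. Assuming $Q^{uu}_t\succ0$ (which holds whenever $\sigma>\bar\sigma$), the first-order condition $Q^{u}_t + Q^{uu}_t\,\delta u_t + Q^{ux}_t\,\delta x_t = 0$ has the unique solution $\delta u^\star_t = -Q^{uu^{-1}}_t Q^{u}_t - Q^{uu^{-1}}_t Q^{ux}_t\,\delta x_t = -k_t - K_t\,\delta x_t$, which is \eqref{eqn:optimal-control-deviation}. Substituting $\delta u^\star_t$ back into the $Q$-form then returns $\mathcal F(\delta x_t)$ in the same quadratic form with $V_t = Q^{xx}_t - Q^{xu}_t Q^{uu^{-1}}_t Q^{ux}_t$ and $v_t = Q^{x}_t - Q^{xu}_t Q^{uu^{-1}}_t Q^{u}_t$, which closes the induction and justifies the word ``recursively''. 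Finally, since at $t=0$ the conditioning data $W_0=(\hat x_0)$ contributes no future observations, the quadratic lemma reduces $\mathbb{E}_\pi[e^{-\sigma\delta\mathcal L}\vert W_0]$, up to a policy-independent normalization, to the exponential of the total stress, whose only control-dependent part is the future stress just minimized; hence the feedback law derived above is the optimizer.

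\textbf{Main obstacle.} The delicate step is the extremization over $\delta x_{t+1}$: one must verify that, after inserting the inductive quadratic form, the Hessian in the extremizing variable has the correct definiteness so that the extremum --- and with it the conditional expectation --- is finite, and one must propagate the coupling between $\mathcal F(\delta x_{t+1})$, the noise weight $\sigma^{-1}\Omega_{t+1}^{-1}$ and the affine feasibility-gap term $\bar f_{t+1}$ cleanly into the definitions of the $Q$-blocks in \eqref{eqn:Q_function}. Once $Q^{uu}_t\succ0$ is secured, the minimization over $\delta u_t$ and the resulting recursions for $V_t$, $v_t$, $\bar v_t$ are routine completions of squares.
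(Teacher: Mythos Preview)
Your proposal is correct and follows essentially the same route as the paper: write the future stress recursion $\mathcal{F}(\delta x_t)=\min_{\delta u_t}\ext_{\delta x_{t+1}}[\delta l_t+\tfrac{\sigma^{-1}}{2}(\delta x_{t+1}-z_t)^T\Omega_{t+1}^{-1}(\delta x_{t+1}-z_t)+\mathcal{F}(\delta x_{t+1})]$, extremize over $\delta x_{t+1}$ to obtain the $Q$-blocks of \eqref{eqn:Q_function}, then solve the first-order condition in $\delta u_t$. You are somewhat more explicit than the paper about the inductive base case, the definiteness caveat tied to $\bar\sigma$, and the closing recursion for $V_t,v_t$ (your Schur-complement forms are algebraically equivalent to the paper's expressions in \eqref{eqn:value_recursion}), but the argument is the same.
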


\begin{proof}

We start by writing the future stress, we know from \cite{whittle1981risk} that the measurement uncertainty yields a prediction and that the future stress takes the form 
\begin{align}
    \mathcal{F}\left( \delta x_t\right) = &\min_{\delta u_t}   \  \underset{\delta x_{t+1}}{\ext} \  l_t + \mathcal{F}(\delta x_{t+1}) \\ 
    &+ \frac{\sigma^{-1}}{2}  \left(\delta x_{t+1} -z_t\right)^T \Omega_{t+1}^{-1}\left(\delta x_{t+1} - z_t\right) \nonumber 
\end{align}
where $z_t = f^x_t  \delta x_t + f^u_t \delta  u_t + \Bar{f}_{t+1}$. The first step is to optimize with respect to $\delta x_{t+1}$ and rearrange the partially optimized future stress into a quadratic in the current controls and states $\delta u_t$ and $\delta x_t$ respectively

\begin{align}
\hspace{-0.05cm} \mathcal{F}\left(\delta x_t\right) &=  \min_{\delta u_t}  \frac{1}{2} \mat{1 \\ \delta  x_t \\  \delta u_t}^T \mat{ \Bar{q}_t & Q^{x^T}_t & Q^{u^T}_t \\ Q^{x}_t & Q^{xx}_t & Q^{xu}_t \\ Q^{u}_t & Q^{ux}_t & Q^{uu}_t  } \mat{1 \\  \delta x_t \\  \delta u_t} \label{eqn:partial-opt-future-sress}
\end{align}

 where 
 \begin{subequations}\label{eqn:Q_function}
 \begin{align}
     Q^{x}_t &= l^{x}_t + f^{x^T}_t M_t \Bar{f}_{t+1} + f^{x^T}_t N_t   \\ 
     Q^{u}_t &= l^{u}_t + f^{u^T}_t N_t + f^{u^T}_t M_t \Bar{f}_{t+1} \\ 
     Q^{uu}_t &= l^{uu}_t + f^{u^T}_t M_t f^{u}_t  \\ 
     Q^{ux}_t &=l^{ux}_t + f^{u^T}_t M_t f^{x}_t \\ 
     Q^{xx}_t &= l^{xx}_t + f^{x^T}_t M_t f^{x}_t \\ 
     M_t &= \left(\sigma \Omega_{t+1} + V^{-1}_{t+1}\right)^{-1}  \label{eqn:Mt} \\ 
    N_t &=  v_{t+1} - \sigma M_t  \Omega_{t+1} v_{t+1} \label{eqn:Nt}
 \end{align}
  \end{subequations}
  
And where $\Bar{q}_t$ is a constant. Being quadratic in $\delta u_t$, the remaining optimization is obtained by solving $\nabla_{\delta u_t} \mathcal{F}(\delta x_t) = 0$ which concludes the proof for the control recursions. A more detailed version of the proof that includes straightforward yet lengthy algebraic manipulations is provided in our public repository along with the code implementing all the simulations. 
\end{proof}

\begin{theorem}
The future stress is a quadratic in the state deviations $\delta x_t$ and takes the form 

\begin{align}
    \mathcal{F}(\delta x_t) &= \frac{1}{2} \delta x^T_t V_{t} \delta x_t + \delta x^T_t v_t +  \Bar{v}_t 
\end{align}

where 
\begin{subequations}\label{eqn:value_recursion}
\begin{align}
    V_t &= Q^{xx}_t + K^T_t Q^{uu}_t K_t + K^T_t Q^{ux}_t + Q^{xu}_t K_t \\ 
    v_t &= Q^{x}_t + K^T_t Q^{uu}_t k_t + K^T_t Q^{u}_t + Q^{xu}_t k_t \\
    \Bar{v}_t  &= \frac{1}{2} \Bar{q}_t + k^T_t Q^{u}_t + \frac{1}{2}k^T_t Q^{uu}_t k_t    
\end{align}
\end{subequations}
\end{theorem}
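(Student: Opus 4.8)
The plan is to close a backward induction on $t$ whose inductive step combines Theorem~\ref{thm:optimal-controls} with a back-substitution of the optimal feedback. The base case is the terminal quadratic cost model from~\eqref{eqn:cost_approx} at time $T$, which is already of the asserted form with $V_T = l^{xx}_T$ (and $v_T$, $\Bar{v}_T$ read off from the terminal linearization). For the inductive step, suppose $\mathcal{F}(\delta x_{t+1})$ is quadratic with coefficients $(V_{t+1}, v_{t+1}, \Bar{v}_{t+1})$. Then the hypothesis of Theorem~\ref{thm:optimal-controls} holds, so after extremizing the future stress over $\delta x_{t+1}$ it collapses to the quadratic form~\eqref{eqn:partial-opt-future-sress} in $(1,\delta x_t,\delta u_t)$ with block entries $\Bar{q}_t, Q^x_t, Q^u_t, Q^{xx}_t, Q^{xu}_t, Q^{uu}_t$ given by~\eqref{eqn:Q_function}; these already incorporate $V_{t+1}$ and $v_{t+1}$ through $M_t$ in~\eqref{eqn:Mt} and $N_t$ in~\eqref{eqn:Nt}.

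The remaining step is the inner minimization over $\delta u_t$. Since~\eqref{eqn:partial-opt-future-sress} is quadratic in $\delta u_t$ with Hessian $Q^{uu}_t$, and $Q^{uu}_t$ is positive definite in the regime where the problem is well posed, the first-order condition $Q^u_t + Q^{ux}_t\delta x_t + Q^{uu}_t\delta u_t = 0$ has the unique minimizer $\delta u^\star_t = -k_t - K_t\delta x_t$ with $k_t = Q^{uu^{-1}}_t Q^u_t$ and $K_t = Q^{uu^{-1}}_t Q^{ux}_t$, i.e. the affine feedback given by Theorem~\ref{thm:optimal-controls} in~\eqref{eqn:optimal-control-deviation}. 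Substituting this feedback into~\eqref{eqn:partial-opt-future-sress} and regrouping by the quadratic, linear and constant powers of $\delta x_t$ leaves a quadratic form in $\delta x_t$; reading off its Hessian, gradient and constant term yields the recursions $V_t$, $v_t$ and $\Bar{v}_t$ reported in~\eqref{eqn:value_recursion}. These depend only on $Q$-quantities, hence recursively on $(V_{t+1}, v_{t+1}, \Bar{v}_{t+1})$, so the inductive hypothesis propagates from $t+1$ to $t$ and the quadratic form holds for all $t$.

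The only genuinely delicate point — as opposed to the bookkeeping of matching coefficients, which is mechanical — is that every step of the recursion be well posed: the $\delta x_{t+1}$-extremization inside Theorem~\ref{thm:optimal-controls} needs $\sigma\Omega_{t+1} + V^{-1}_{t+1}$ invertible so that $M_t$ in~\eqref{eqn:Mt} exists, and the $\delta u_t$-minimization needs $Q^{uu}_t \succ 0$; both hold precisely under the threshold condition $\sigma > \Bar{\sigma}$ discussed in Section~\ref{sec:background}, and one should verify that this is preserved as the recursion runs from $t = T$ downward. The residual algebra is the same ``straightforward yet lengthy'' manipulation already deferred for Theorem~\ref{thm:optimal-controls}: it is most economically carried out using the identities $K^T_t Q^{uu}_t = Q^{xu}_t$ and $k^T_t Q^{uu}_t = Q^{u^T}_t$ together with the quadratic-completion formula $\min_{\delta u_t}\bigl(\tfrac12\delta u_t^T Q^{uu}_t\delta u_t + Q^{u^T}_t\delta u_t\bigr) = -\tfrac12 Q^{u^T}_t Q^{uu^{-1}}_t Q^u_t$ and its linear analogue for the cross term, so I would organize the write-up around those identities rather than expanding the $3\times 3$ block form term by term.
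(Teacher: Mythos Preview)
Your proposal is correct and follows essentially the same approach as the paper: the paper's proof simply states that one substitutes the optimal control~\eqref{eqn:optimal-control-deviation} into the partially optimized future stress~\eqref{eqn:partial-opt-future-sress} and regroups, deferring the algebra to the accompanying repository. Your write-up adds an explicit inductive framing, the well-posedness caveats, and the organizing identities $K_t^T Q^{uu}_t = Q^{xu}_t$, $k_t^T Q^{uu}_t = Q^{u^T}_t$, all of which are welcome elaborations but do not depart from the paper's argument.
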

\vspace{0.2cm}

\begin{proof}
The proof follows directly by replacing the optimal controls in $\mathcal{F}(\delta x_t)$ and regrouping, details of the calculations are provided in the accompanying repository. 
\end{proof}


\subsection{Approximating the Cost Expectation}

In order to implement a line search procedure, it is necessary to evaluate the nonlinear risk sensitive cost. However, for a general nonlinear cost, computing the expectation is generally intractable analytically. Therefore, we use a quadratic approximation of the cost and a linearization of the dynamics to obtain an approximation of the risk sensitive cost. The linearization and quadratic approximations are done only over the states as the candidate control trajectory is considered to be fixed. More precisely, given a candidate control sequence $\mathcal{U}$ and a nominal trajectory $\mathcal{X}$, we evaluate:
\begin{equation}
\Tilde{\mathcal{J}} = - \sigma^{-1} \ln \left( \mathbb{E}  \left[ e^{-\sigma   \Tilde{\mathcal{L}}}  \vert u_{0:T-1} \right]  \right)
\end{equation} subject to 
\begin{subequations}
\begin{align}
    \delta x_0 &= x_0 - \hat x_0\\
    \delta x_{t+1} &= f^x_t \delta x_t + \Bar{f}_{t+1} +  \omega_{t+1} \label{eq:discrete_linearized_dyamics_for_cost_evaluation}
\end{align}
\end{subequations}
with: 
\begin{align}
   \Tilde{\mathcal{L}}  &= \sum_{t=0}^T \ \underbrace{l(x_t^n, u_t)}_{\bar{l}_t}  +  \underbrace{\frac{1}{2}  \delta x_t^T  l^{xx}_t  \delta x_t + l^{x^T}_t \delta x_t  }_{\delta l_t}
\end{align}

\begin{theorem}
\begin{align}
\Tilde{\mathcal{J}}  &= \sigma^{-1} \ln( \alpha ) +  \bar{v}_0 -  \frac{1}{2} v_0^T ( V_{0} + \sigma  ^{-1}\chi^{-1}_{0})^{-1} v_0 \label{eqn:line-search-cost-approx}
\end{align}
with
\begin{align}
    \alpha &= \vert I + \sigma V_0 \chi_0 \vert ^ \frac{1}{2} \ \Pi_{t=1}^T \vert I + \sigma V_t \Omega_t \vert ^ \frac{1}{2}
\end{align}
and where $ \Bar{v}_0, v_0, V_0$ are the terminal values of the recursion:
\begin{subequations}\label{eqn:cost-approx-value}
\begin{align}
    V_t =& \ l_{t}^{xx} + {f^x_t}^T M_t f^x_t \\
    v_t =& \ l_{t}^{x} + {f^x_t}^T M_t \bar{f}_{t+1} +  {f^x_t}^T N_t \\
    \bar{v}_t =& \  \bar{l}_{t}  + \Bar{v}_{t+1}  + \bar{f}_{t+1}^T N_t + \frac{1}{2} \bar{f}_{t+1}^T M_t \bar{f}_{t+1}  \\
    & - \frac{1}{2}  v_{t+1}^T (V_{t+1} +  \sigma^{-1} \Omega_{t+1}^{-1})^{-1}  v_{t+1}    \nonumber
\end{align}
\end{subequations}
where $M_t$ and $N_t$ have the same form as~\eqref{eqn:Mt} and~\eqref{eqn:Nt} but evaluated at $V_t$ and $v_t$ given in~\eqref{eqn:cost-approx-value} 

\end{theorem}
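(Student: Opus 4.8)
The plan is to evaluate $\Tilde{\mathcal{J}}$ in closed form by marginalising the linearised state deviations one time step at a time, mirroring the Jacobson--Whittle reduction that produced~\eqref{eqn:cost-expectation-definition}; since the candidate controls $u_{0:T-1}$ are held fixed here, this is a pure integration with no minimization over controls. First I would change integration variables from the process noises $\omega_{1:T}$ to the state deviations $\delta x_{1:T}$ via~\eqref{eq:discrete_linearized_dyamics_for_cost_evaluation}: because $\omega_{t+1} = \delta x_{t+1} - f^x_t \delta x_t - \bar f_{t+1}$, this map is affine and block lower-triangular with identity diagonal, hence has unit Jacobian, and together with $\delta x_0 = x_0 - \hat x_0$ it writes the prior as the density $\kappa^{-1} e^{-\mathcal{D}}$ of $(\delta x_0,\dots,\delta x_T)$, where $\kappa = |2\pi\chi_0|^{1/2}\prod_{t=1}^T |2\pi\Omega_t|^{1/2}$ and $\mathcal{D}$ is $\tfrac12 \delta x_0^T \chi_0^{-1}\delta x_0$ plus $\sum_{t=1}^T \tfrac12(\delta x_t - f^x_{t-1}\delta x_{t-1} - \bar f_t)^T \Omega_t^{-1}(\delta x_t - f^x_{t-1}\delta x_{t-1} - \bar f_t)$. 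Hence $\mathbb{E}[e^{-\sigma\Tilde{\mathcal{L}}}\mid u_{0:T-1}] = \kappa^{-1}\int e^{-\sigma\Tilde{\mathcal{L}} - \mathcal{D}}\, d\delta x_{0:T}$, whose integrand is the exponential of a quadratic in $\delta x_{0:T}$.

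Next I would prove by backward induction on $t$ that, after integrating out $\delta x_{t+1:T}$, the remaining integrand equals $c_t\, e^{-\sigma(\frac12 \delta x_t^T V_t \delta x_t + \delta x_t^T v_t + \bar v_t)}$ with $(V_t, v_t, \bar v_t)$ exactly those of~\eqref{eqn:cost-approx-value} and $c_t$ an explicit scalar. The base case $t = T$ reads off $V_T = l_T^{xx}$, $v_T = l_T^{x}$, $\bar v_T = \bar l_T$, the $\bar f_{T+1}$- and $V_{T+1}$-terms being vacuous. For the inductive step, the only $\delta x_{t+1}$-dependent factor is $e^{-\sigma(\bar l_t + \delta l_t + \bar v_{t+1})}\exp\!\big(-\sigma(\tfrac12 \delta x_{t+1}^T V_{t+1}\delta x_{t+1} + \delta x_{t+1}^T v_{t+1}) - \tfrac12(\delta x_{t+1} - z_t)^T \Omega_{t+1}^{-1}(\delta x_{t+1} - z_t)\big)$ with $z_t = f^x_t \delta x_t + \bar f_{t+1}$; its exponent is quadratic in $\delta x_{t+1}$ with curvature $\sigma V_{t+1} + \Omega_{t+1}^{-1}$, so the quadratic lemma of Appendix~\ref{lemma:integration_eofq} extracts the factor $|2\pi(\sigma V_{t+1} + \Omega_{t+1}^{-1})^{-1}|^{1/2}$ and leaves a quadratic in $z_t$. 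Expanding that quadratic in $z_t = f^x_t \delta x_t + \bar f_{t+1}$, collecting its $\delta x_t$-quadratic, $\delta x_t$-linear and constant parts, and rewriting the resulting matrices by the identities $\Omega_{t+1}^{-1} - \Omega_{t+1}^{-1}(\sigma V_{t+1} + \Omega_{t+1}^{-1})^{-1}\Omega_{t+1}^{-1} = \sigma M_t$, $\Omega_{t+1}^{-1}(\sigma V_{t+1} + \Omega_{t+1}^{-1})^{-1} v_{t+1} = N_t$ (with $M_t, N_t$ as in~\eqref{eqn:Mt}--\eqref{eqn:Nt} evaluated at $V_{t+1}, v_{t+1}$) and $\sigma(\sigma V_{t+1} + \Omega_{t+1}^{-1})^{-1} = (V_{t+1} + \sigma^{-1}\Omega_{t+1}^{-1})^{-1}$ reproduces exactly the recursion~\eqref{eqn:cost-approx-value} together with the updated scalar $c_t$.

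Finally I would integrate out $\delta x_0$ against its prior term $\tfrac12 \delta x_0^T \chi_0^{-1}\delta x_0$ with the same lemma, which yields the factor $|2\pi(\sigma V_0 + \chi_0^{-1})^{-1}|^{1/2}$ and the term $-\tfrac12 v_0^T(V_0 + \sigma^{-1}\chi_0^{-1})^{-1} v_0$. Multiplying the scalars $c_0\cdots c_T$ and dividing by $\kappa$, the powers of $2\pi$ cancel and the determinant identities $|\chi_0||\sigma V_0 + \chi_0^{-1}| = |I + \sigma V_0 \chi_0|$ and $|\Omega_t||\sigma V_t + \Omega_t^{-1}| = |I + \sigma V_t \Omega_t|$ collapse the product to $\alpha^{-1}$, giving $\mathbb{E}[e^{-\sigma\Tilde{\mathcal{L}}}\mid u_{0:T-1}] = \alpha^{-1}\exp\!\big(-\sigma\bar v_0 + \tfrac{\sigma}{2} v_0^T(V_0 + \sigma^{-1}\chi_0^{-1})^{-1} v_0\big)$; taking $-\sigma^{-1}\ln(\cdot)$ then produces~\eqref{eqn:line-search-cost-approx}.

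I expect the main obstacle to be bookkeeping rather than conceptual: carrying the scalar terms and the sign of $\sigma$ correctly through every marginalisation, and verifying the Woodbury/parallel-sum identities that recast the raw Gaussian-integral output into the compact $M_t$/$N_t$ form of~\eqref{eqn:cost-approx-value}. One must also observe that each step is well defined only when $\sigma V_{t+1} + \Omega_{t+1}^{-1} \succ 0$ (and $\sigma V_0 + \chi_0^{-1} \succ 0$), which is precisely the threshold condition $\sigma > \bar\sigma$ of Section~\ref{sec:background}; otherwise the extremization in the quadratic lemma has no finite value.
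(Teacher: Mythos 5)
Your proposal is correct and follows essentially the same route as the paper's (deferred) proof: a backward sweep that eliminates $\delta x_{t+1}$ via the quadratic lemma of Appendix~\ref{lemma:integration_eofq}, accumulating the determinant factors into $\alpha$ and the completed squares into the recursion~\eqref{eqn:cost-approx-value}, with a final marginalisation over $\delta x_0$ producing the $(V_0+\sigma^{-1}\chi_0^{-1})^{-1}$ term. The Woodbury identities you cite for recovering $M_t$ and $N_t$, the unit-Jacobian change of variables, and the positive-definiteness caveat $\sigma V_{t+1}+\Omega_{t+1}^{-1}\succ 0$ all check out.
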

 \begin{proof}
 The proof follows a similar logic of theorem~\ref{thm:optimal-controls} and is provided fully in the repository. 
 \end{proof}

\subsection{Step Length \& Partial Update of the Gaps} 

We choose to update the gaps partially as is done in~\cite{mastalli2020crocoddyl}. This allows for a partial contraction of the gaps unless a full step $\alpha=1$ is taken thus allowing for a better convergence when initialized from infeasible nominal trajectories, the update equations are then

\begin{align}
    u^{i+1}_t  &= u^i_t -  \alpha k_t - K_t\left(x^{i+1}_t - x^{i}_t\right),  \label{eqn:forward-partial-control-update}\\
    x^{i+1}_{t+1}& = f_t\left(x^{i+1}_t ,  u^{i+1}_t  \right) - \left(1 - \alpha \right) \bar{f}^{i}_{t+1} . \label{eqn:forward-partial-gaps}
\end{align}

with the initial condition $x^{i+1}_0 =  x^{i}_0$. We perform a simple backtracking line search for a set of values of $\alpha$ until no more decrease in the approximated cost is observed. The pseudo-code algorithm is provided in Algorithm~\ref{alg:planning}.

\begin{algorithm}
\DontPrintSemicolon
  \KwInput{nonlinear cost, nonlinear dynamics, nonlinear measurement, sensitivity, $\mathcal{X}^0$, $\mathcal{U}^0$}
  \textbf{Initialization:} compute dynamics \& cost approximations~\eqref{eqn:dynamics_approx},~\eqref{eqn:cost_approx} \; 
  \While{Not Converged}
  {
    \tcp{backward pass}
  	\For{$t=T$; $t\geq 0$; $t$ - - }{
  	$k_t, K_t \gets eqn.~\eqref{eqn:optimal-control-deviation}$ \; 
  	$ V_t, v_t, \bar{v}_t \gets eqn.~\eqref{eqn:value_recursion}$ \; 
  	}
  	\tcp{forward pass} 
  	\For{$\alpha \in \mathcal{A} $}{
  	    \For{$t=0$; $t++$; $t\leq T$}{
  		$u^{i+1}_t \gets eqn.~\eqref{eqn:forward-partial-control-update}$  \;
  		$x^{i+1}_t \gets eqn.~\eqref{eqn:forward-partial-gaps}$ \;
  		}
  		$\mathcal{J}^{i+1} \gets  eqn.~\eqref{eqn:line-search-cost-approx}$ \;   
  		\If{$\mathcal{J}^{i+1} \leq \mathcal{J}^{i}$ }
  		    {
  		    $\mathcal{X}, \mathcal{U} \gets x^{i+1}_{0:T}, u^{i+1}_{0:T-1}$ \;
  		    compute dynamics \& cost approximations~\eqref{eqn:dynamics_approx},~\eqref{eqn:cost_approx} \; 
  		    \textbf{break} \;}
  	}
  	
  	\If{$\vert \mathcal{J}^{i+1} - \mathcal{J}^{i} \vert \leq 10^{-12}$  }
  	{
  	    $Converged \gets True$ \; 
  	}
  }
\KwOutput{$\mathcal{X}$, $\mathcal{U}$, $K_{0:T-1}$, $V_{0:T}$, $v_{0:T}$}
\caption{Planning Phase}\label{alg:planning}
\end{algorithm}

\section{Estimation \& Optimal Controls}\label{sec:estimation}

The actual optimal control problem of the imperfectly observable system is not fully solved yet. So far we only have the optimal control if optimal $x_t$ were known. The next step consists of solving for the past stress 

\subsection{Past Stress Optimization}

The solution to the past stress~\eqref{past_stress_reccursion} optimization with the cost approximation~\eqref{eqn:line-search-cost-approx}  and linear dynamics~\eqref{eqn:dynamics_approx} can be obtained using a forward recursion as described in the following theorem:

\begin{theorem}\label{thm:past-stress-recursion}
\begin{align}
  \sigma  \mathcal{P}( \delta x_t, W_t) = \frac{1}{2} \left(\delta x_t - \delta \hat{x}_t\right)^T P^{-1}_t \left( \delta x_t - \delta  \hat{x}_t\right)  
\end{align}
where $P_t$ and $\delta \hat{x}_t$ are subject to the following recursion
\begin{align}\label{eqn:estimator-equation}
\delta \hat{x}_{t+1} =& \ f^{x}_{t} \delta \hat{x}_{t} + f^{u}_{t} \delta u_{t} + \bar{f}_{t+1} + G_t \left( \delta y_{t+1} - h^{x}_{t} \delta \hat{x}_{t}\right)  \nonumber  \\
&-   \sigma  f^{x}_t H_t \left(    l^{xx}_t  \delta \hat x_t +  l^{xu}_{t} \delta u_{t} +   l^x_{t}  \right) 
\end{align}
\begin{subequations}\label{eqn:estimator-matrices}
\begin{align}
    H_t &= \left( P^{-1}_{t}  + h^{x^T}_{t} \Gamma^{-1}_{t+1}  h^{x}_{t} + \sigma l^{xx}_t    \right) ^{-1}\\
    G_t &= f^{x}_{t} H_t  h^{x^T}_{t} \Gamma^{-1}_{t+1} \label{eqn:estimator_gain}\\ 
    P_{t+1} &= \Omega_{t+1} + f^{x}_{t}  H_t f^{x^T}_{t}\label{eqn:estimator_curvature}
\end{align} 
\end{subequations}
\end{theorem}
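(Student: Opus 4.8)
The proof proceeds by induction on $t$, showing that the quadratic form $\sigma\mathcal{P}(\delta x_t, W_t) = \tfrac12(\delta x_t - \delta\hat x_t)^T P_t^{-1}(\delta x_t - \delta\hat x_t)$ is propagated by the past-stress recursion of Section~\ref{sec:background}, which after multiplication by $\sigma$ reads $\sigma\mathcal{P}(\delta x_{t+1},W_{t+1}) = \ext_{\delta x_t}\big[\,\sigma\,\delta l_t + d_{t+1} + \sigma\mathcal{P}(\delta x_t,W_t)\,\big]$ (the extremization being the stationary point in $\delta x_t$, a minimum or maximum according to the sign convention of Section~\ref{sec:background}). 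The base case is the boundary condition: with $\delta x_0 = x_0-\hat x_0$ one reads off $P_0=\chi_0$ and $\delta\hat x_0=0$.

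For the inductive step I would substitute into the recursion (i) the quadratic cost model~\eqref{eqn:cost_approx} with the already-selected $\delta u_t$ frozen, so that $\delta l_t$ reduces to $\tfrac12\delta x_t^T l^{xx}_t\delta x_t + (l^{xu}_t\delta u_t+l^x_t)^T\delta x_t$ up to a constant; (ii) the linearised models~\eqref{eqn:dynamics_approx}, i.e. $\omega_{t+1}=\delta x_{t+1}-f^x_t\delta x_t-f^u_t\delta u_t-\bar f_{t+1}$ and $\gamma_{t+1}=\delta y_{t+1}-h^x_t\delta x_t$, so that $d_{t+1}=\tfrac12\omega_{t+1}^T\Omega_{t+1}^{-1}\omega_{t+1}+\tfrac12\gamma_{t+1}^T\Gamma_{t+1}^{-1}\gamma_{t+1}$; and (iii) the induction hypothesis for $\sigma\mathcal{P}(\delta x_t,W_t)$. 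The bracketed expression is then jointly quadratic in $(\delta x_t,\delta x_{t+1})$, and its Hessian block in $\delta x_t$ is $\sigma l^{xx}_t + h^{x^T}_t\Gamma_{t+1}^{-1}h^x_t + P_t^{-1} + f^{x^T}_t\Omega_{t+1}^{-1}f^x_t = H_t^{-1}+f^{x^T}_t\Omega_{t+1}^{-1}f^x_t$, which is exactly where the threshold on $\sigma$ enters: $H_t^{-1}\succ 0$ (equivalently $\sigma$ above the problem-dependent $\bar\sigma$) makes the extremum in $\delta x_t$ well defined.

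The core of the argument is the elimination of $\delta x_t$. Setting $\nabla_{\delta x_t}(\cdot)=0$ and first dropping the $\omega_{t+1}$ coupling shows that the stationary point factors as a measurement update, $\delta x_t^{\mathrm{post}} = \delta\hat x_t + H_t h^{x^T}_t\Gamma_{t+1}^{-1}(\delta y_{t+1}-h^x_t\delta\hat x_t) - \sigma H_t\big(l^{xx}_t\delta\hat x_t + l^{xu}_t\delta u_t + l^x_t\big)$, which uses only the identity $H_t^{-1}=P_t^{-1}+h^{x^T}_t\Gamma_{t+1}^{-1}h^x_t+\sigma l^{xx}_t$. Reinstating $\omega_{t+1}$ and taking the Schur complement of the $\delta x_t$-block produces a quadratic in $\delta x_{t+1}$; applying Woodbury in the form $\Omega_{t+1}^{-1}-\Omega_{t+1}^{-1}f^x_t\big(H_t^{-1}+f^{x^T}_t\Omega_{t+1}^{-1}f^x_t\big)^{-1}f^{x^T}_t\Omega_{t+1}^{-1} = \big(\Omega_{t+1}+f^x_t H_t f^{x^T}_t\big)^{-1}$ collapses its curvature to $P_{t+1}^{-1}$ with $P_{t+1}=\Omega_{t+1}+f^x_t H_t f^{x^T}_t$, and its centre to $\delta\hat x_{t+1}=f^x_t\delta x_t^{\mathrm{post}}+f^u_t\delta u_t+\bar f_{t+1}$, which is precisely~\eqref{eqn:estimator-equation} once $G_t=f^x_t H_t h^{x^T}_t\Gamma_{t+1}^{-1}$ is substituted. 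Since $\Omega_{t+1}\succ0$ and $H_t\succ0$ we get $P_{t+1}\succ0$, closing the induction.

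The main obstacle is this last piece of bookkeeping: verifying that the single extremization over $\delta x_t$ genuinely splits into the ``measurement-update then time-update'' pair, i.e. that the naive information-form curvature $H_t^{-1}+f^{x^T}_t\Omega_{t+1}^{-1}f^x_t$ and the accompanying linear term rearrange, via Woodbury and completing the square, into the covariance-form propagation $P_{t+1}=\Omega_{t+1}+f^x_t H_t f^{x^T}_t$ together with the Kalman-type mean recursion~\eqref{eqn:estimator-equation}. A secondary subtlety is tracking the scalar constant generated at each step (carrying the $\tfrac12\delta y_{t+1}^T\Gamma_{t+1}^{-1}\delta y_{t+1}$ and log-normalisation contributions): it plays no role in $P_t$, $\delta\hat x_t$, or hence the optimal control, but must be retained if the numerical value of $\mathcal{P}$ is required. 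This algebra parallels that of Theorem~\ref{thm:optimal-controls} and the quadratic lemma of Appendix~\ref{lemma:integration_eofq}, so I would keep the proof at the structural level above and defer the manipulations to the accompanying repository.
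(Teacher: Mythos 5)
Your proposal is correct and follows essentially the same route as the paper: induction on $t$, absorbing the quadratic cost and noise terms into a joint quadratic in $(\delta x_t,\delta x_{t+1})$, extremizing over $\delta x_t$, and completing the square to read off $H_t$, $G_t$, $P_{t+1}$ and the mean recursion. The only difference is one of bookkeeping order (you fold the measurement information into $H_t$ before the process-noise elimination, whereas the paper first forms $\tilde P_{t-1}$ from the cost alone) and that you spell out the Schur-complement/Woodbury step the paper defers to its repository.
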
 
\vspace{0.4cm}

\begin{proof}
The proof is done by induction. Given $\mathcal{P}\left(\delta x_{t-1}, W_{t-1}\right)$, We first notice that:
\begin{align}
     \sigma l_{t-1} &+  \sigma \mathcal{P}\left(\delta x_{t-1}, W_{t-1}\right) = \\
     &\frac{1}{2} \left( \delta x_{t-1} - \delta \Tilde{x}_{t-1} \right)^T \Tilde{P}^{-1}_{t-1} \left(\delta x_{t-1} - \delta \Tilde{x}_{t-1} \right) + C^{ste} \nonumber
\end{align}
where:
\begin{subequations}
\begin{align}
    \Tilde{P}_{t-1} &= \left(P^{-1}_{t-1} + \sigma l^{xx}_{t-1} \right)^{-1} \\ 
    \delta \Tilde{x}_{t-1} &= \Tilde{P}_{t-1} \left(  P^{-1}_{t-1} \delta \hat{x}_{t-1}  -   \sigma l^{xu}_{t-1} \delta u_{t-1}  -  \sigma l^x_{t-1}  \right)
\end{align}
\end{subequations}
Hence,~\eqref{past_stress_reccursion} can be written as:

\begin{align}
    \sigma \mathcal{P} & \left(\delta x_{t}, W_t \right) =  C^{ste}\\
    & + \underset{\delta x_{t-1}}{\ext} \,\, d_{t} + \frac{1}{2}  \left(\delta x_{t-1} - \delta \Tilde{x}_{t-1} \right)^T \Tilde{P}^{-1}_{t-1} \left( \delta x_{t-1} - \delta \Tilde{x}_{t-1} \right)  \nonumber
\end{align}

Carrying out this optimization over $\delta x_{t-1}$ and rearranging the terms to construct a perfect square concludes the proof. This last step is included in the accompanying repository. 
\end{proof}

\subsection{Minimum Stress State \& Optimal Control}

Now that both $\mathcal{P}\left(\delta x_t, W_t\right)$ and $\mathcal{F}(\delta x_t)$ are obtained, what remains is optimizing the last remaining variable $\delta x_t$, i.e. solving 
\begin{align}
   \delta \check{x}_t &= \underset{\delta x_t}{\operatorname{argmin}} \quad \mathcal{P}(\delta x_t, W_t) + \mathcal{F}(\delta x_t) \nonumber \\
     &= \left(I + \sigma P_t V_t\right)^{-1}\left(\delta \hat{x}_t - \sigma P_t v_t\right)\label{eqn:minimal_stress_estimate}
\end{align}
which results in the minimum stress state deviation $\delta \check{x}_t$. And finally we obtain the optimal control law 

\begin{align} 
   u^{*}_{t} &=  u^n_t -  K_t \left(I + \sigma P_t V_t\right)^{-1} \left(\delta \hat{x}_t - \sigma P_t v_t\right)  \label{eqn:final-optimal-control}
\end{align}

\begin{algorithm}
\DontPrintSemicolon
  \KwInput{$\mathcal{X}$, $\mathcal{U}$, $K_{0:T-1}$, $V_{0:T}$, $v_{0:T}$, $\delta y_{t}$, $\delta \hat{x}_{t-1}$, $P_{t-1}$}
  
  $H_{t-1}, G_{t-1}, P_{t} \gets eqn.~\eqref{eqn:estimator-matrices}$ \; 
  $\delta \hat{x}_{t} \gets eqn.~\eqref{eqn:estimator-equation}$ \; 
  $\delta \check{x}_t \gets eqn.~\eqref{eqn:minimal_stress_estimate}$ \; 
  $u^\star_t \gets eqn.\eqref{eqn:final-optimal-control}$ \;
\KwOutput{$ u^{\star}_t$}
\caption{Run Time Estimation \& Control}\label{alg:estimation}
\end{algorithm}

\section{Results}\label{sec:results}




To demonstrate the capabilities of the algorithm with nonlinear systems with measurement noise, we optimize the motion of a one dimensional pneumatic hopper depicted in~\ref{fig:snapshots}. The state vector is four dimensional $x^T=[q^h,q^f, v^h,v^f]$ including the hip position relative to the ground, the foot position relative to the hip position, and their velocities respectively. The control vector is one dimensional controlling the acceleration of the foot relative to the hip. The distance from the foot position to the ground can then be defined as $e=q^f - q^h$. 

\begin{figure}[ht]
   \centering
   \includegraphics[width=0.5\linewidth]{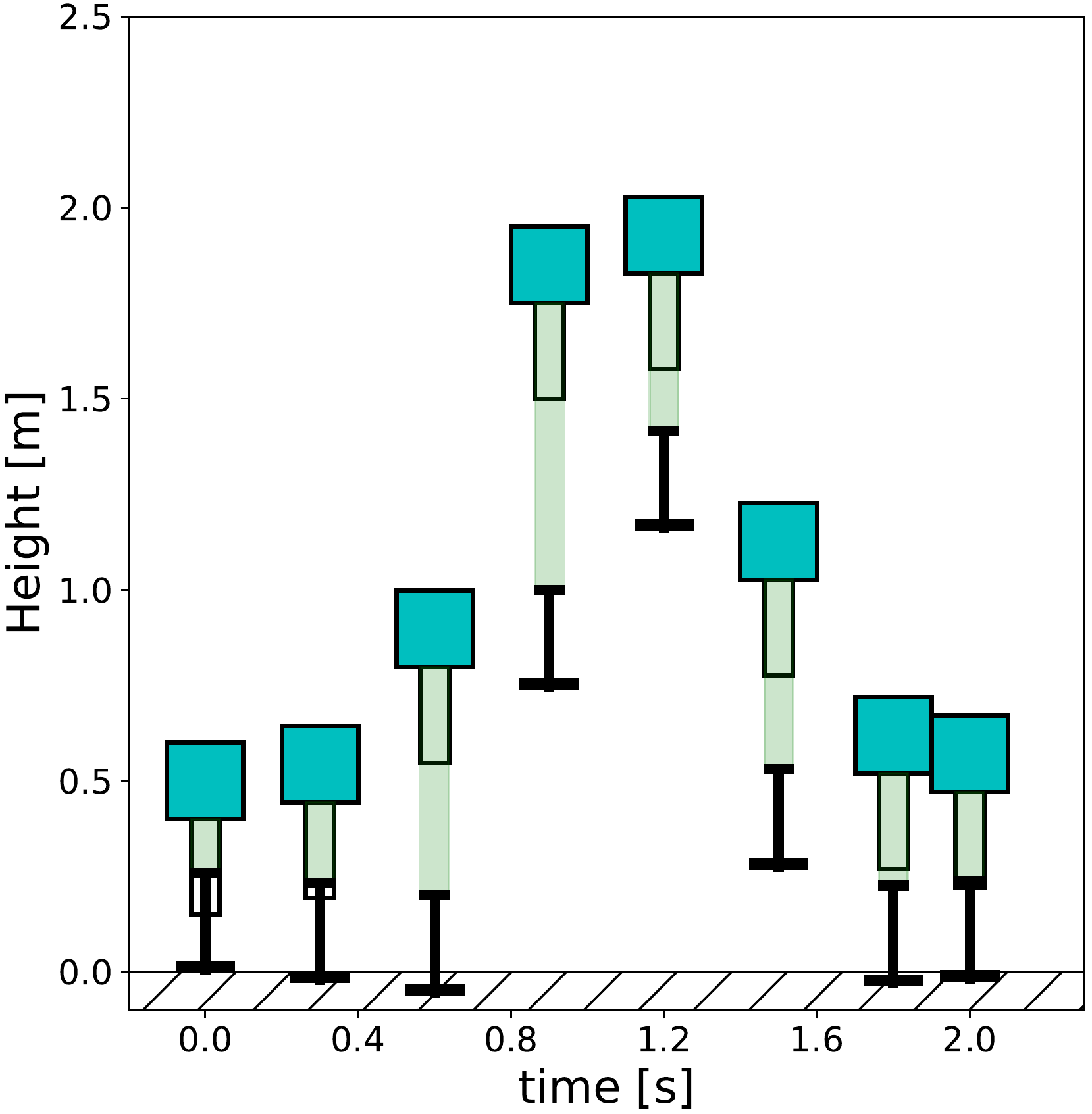}  
   \caption{Simulation snapshots with pneumatic hopper.}
   \label{fig:snapshots}
\end{figure}

The simulation uses a stiff visco-elastic contact model $\lambda(e, \Dot{e}) = k e - b \Dot{e}$. For optimization we will use the relaxed contact model 

\begin{equation}
  \lambda(e)=\begin{cases}
    0., & \text{if $e<0$}.\\
    \frac{k}{2\alpha} e^2, & \text{if $0 \leq e < \alpha$}. \\ 
    k e - \frac{1}{2} k \alpha & \text{otherwise}. 
  \end{cases}
\end{equation}

To solve the optimal control problem, we use $\alpha = 0.01$ and $k=500$, where the simulation parameters use $k=10^{5}$ and $b=300$ (we aim to demonstrate that feedback can compensate for the unexpectedly stiff contact in simulation). The cost function used includes three phases, the stance phase, which is along the horizon $T$ except for $t = T/2$, the jumping cost for $t = T/2$ and terminal cost for time $t=T$. We choose to write the original cost as $l_t = \frac{1}{2}(x_t-x_{des})^T l^{xx}_t(x_t-x_{des}) + \frac{1}{2} u^T_t l^{uu}_t u_t$ where $x^T_{des} = [0.5, 0., 0., 0.]$ for the stance and terminal costs and  $x^T_{des}= [2., 0., 0., 0.]$ for the jump phase. The cost weights are set as follows, for the stance phase $ l^{xx} = diag(10^1, 1, 10^{-4}, 0.)$, for the jump phase $ l^{xx} = diag(10^1, 10^{-2}, 10^{-1}, 0.)$, for the terminal phase we have $ l^{xx} = diag(10^1, 10^1, 1., 1.)$, and  $ l^{uu} = 10^{-3}$ for all running costs. The noise is assumed to be uniform with $\Omega_t = \Gamma_t = diag(10^{-3}, 10^{-3}, 10^{-3}, 10^{-3})$.  One thousand simulations are carried out for each of the three scenarios, a risk neutral controller and estimator namely DDP~\cite{mastalli2020crocoddyl} combined with an extended kalman filter. A risk averse controller with $\sigma=-0.5$ and a risk seeking controller with $\sigma = 10$. All the results below are presented in terms of the mean/average and standard deviation of the data obtained. Risk Sensitive behavior achieves superior performance compared to the two other formulations as we will show below.  

\begin{figure}[ht]
\centering
\captionsetup{justification=centering}
\subfloat[][DDP Plan]{
\includegraphics[width=0.5\linewidth]{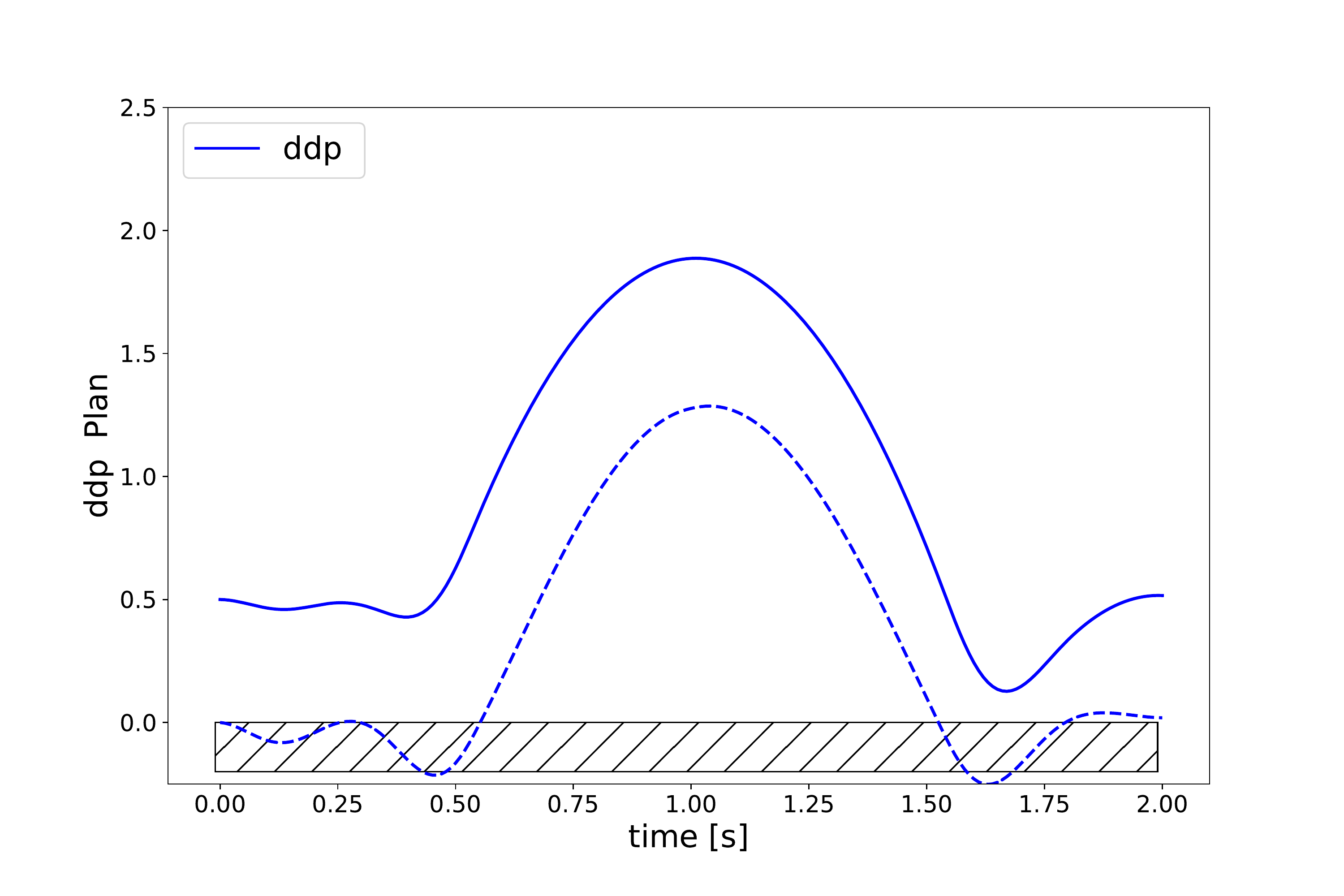}
\label{fig:ddp-plan}} 
\subfloat[][Risk Averse Plan]{
\includegraphics[width=0.5\linewidth]{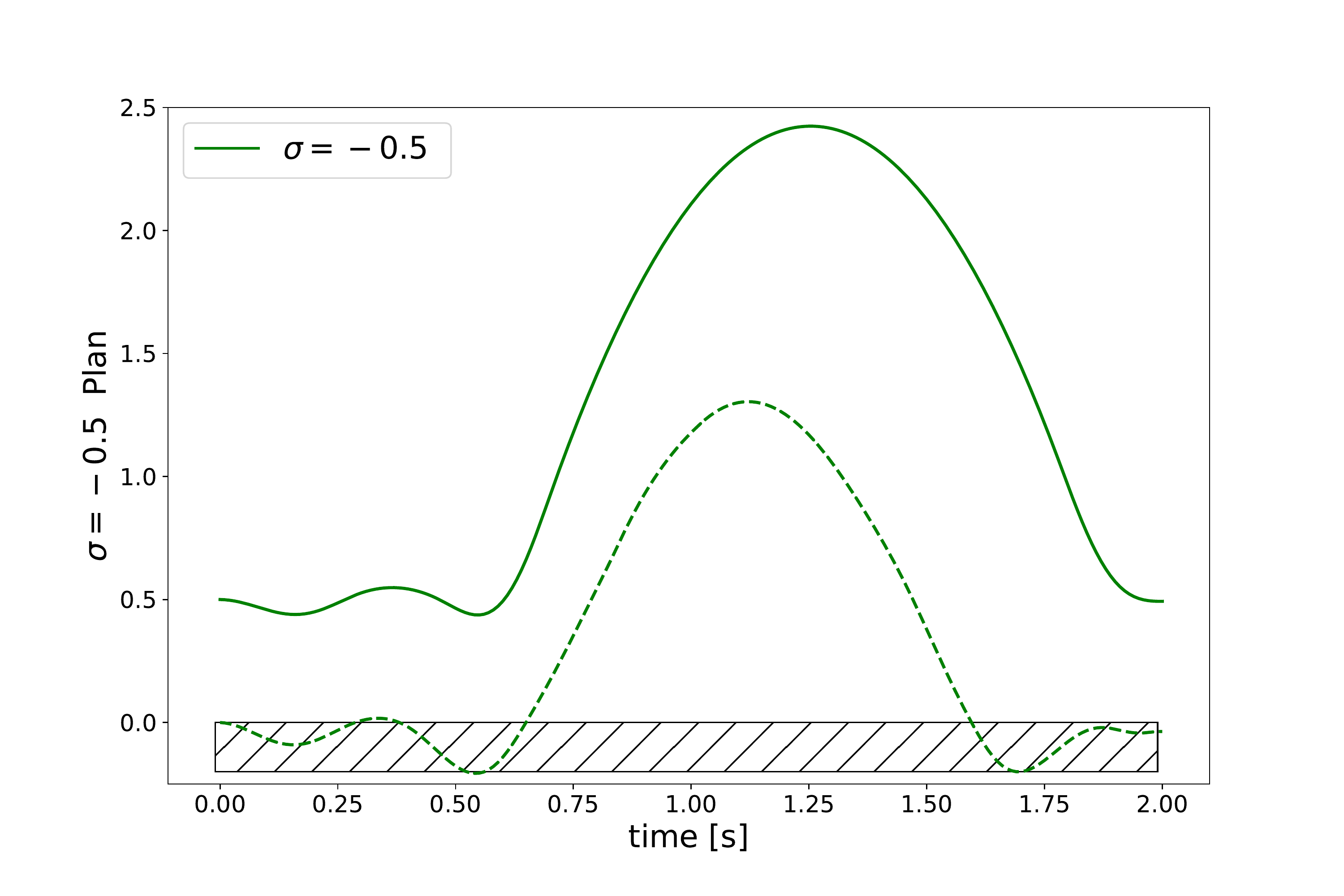}
\label{fig:averse-plan}} \\
\subfloat[][Risk Seeking Plan]{
\includegraphics[width=0.5\linewidth]{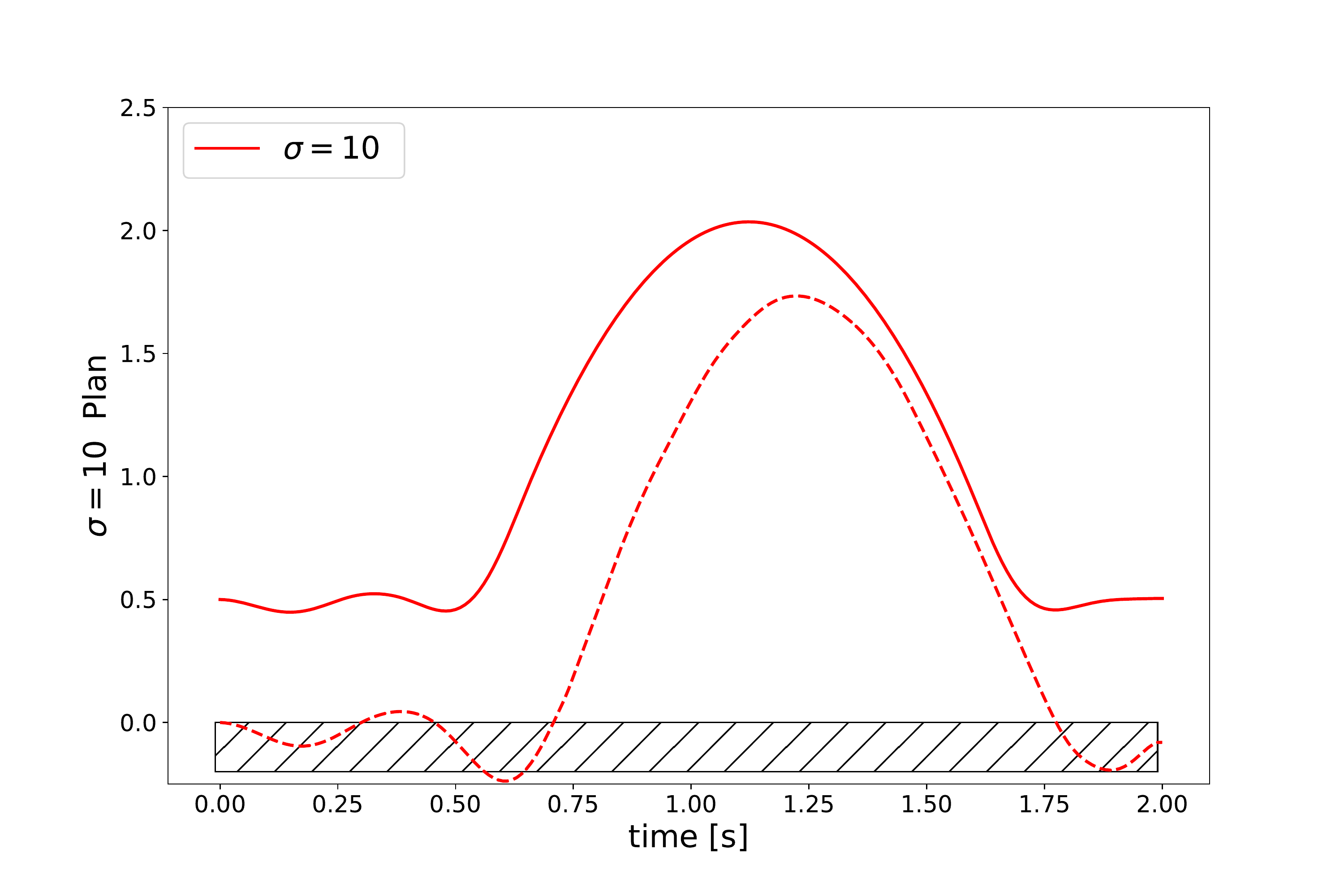}
\label{fig:seeking-plan}} 
\caption{Planned Trajectories, Solid lines represent the hip trajectory and dashed lines represent the foot trajectory.}
\label{fig:planned-trajectories}
\vspace{-0.3cm}
\end{figure}

\subsection{Optimized Trajectories}

One advantage of the proposed procedure is that it is capable of generating different trajectories depending on the noise and sensitivity parameters, this is evident from the resulting plans. As the results in Fig.~\ref{fig:planned-trajectories} show, both DDP and risk seeking scenarios generate a plan that reaches exactly the desired height Fig.~\ref{fig:ddp-plan} and Fig.~\ref{fig:seeking-plan}, whereas the risk averse scenario generates a higher maximum height Fig.~\ref{fig:averse-plan}. The penetration into the ground in the plan is due to using a relaxed contact model that does not generate stiff contact behaviors. The goal is to have the control feedback compensate for the stiff behavior in the simulations.

\subsection{Simulation Statistics}

In order to compare performance, the statistics of the accumulated original cost $\mathcal{L}$ is computed for each of the three scenarios studied and summarized in Table~\ref{tab:cost-stats}. Risk averse design achieves a lower cost average, and a significantly tighter standard deviation. Whereas risk seeking design generates the highest cost average and ddp generates the highest standard deviation (Fig.~\ref{fig:cost-stats}).

\begin{table}[ht]
\centering
\captionsetup{justification=centering}
\begin{tabular}{|c|c|c|c|} \hline
    & DDP &  $\sigma= -0.5$ & $\sigma= 10$\\ \hline
\rowcolor[gray]{.9} Total average &  $5.79 \times 10^{-2}$ & $5.13 \times 10^{-2}$ & $13.92 \times 10^{-2}$  \\ 
Maximum Std & $13.17 \times 10^{-2}$ & $3.53 \times 10^{-2}$ & $12.92 \times 10^{-2}$ \\ \hline
\end{tabular}
\caption{Cost Statistics}
\label{tab:cost-stats}
\vspace{-0.4cm}
\end{table}

\begin{figure}[ht]
   \centering
    \captionsetup{justification=centering}
   \includegraphics[width=0.5\linewidth]{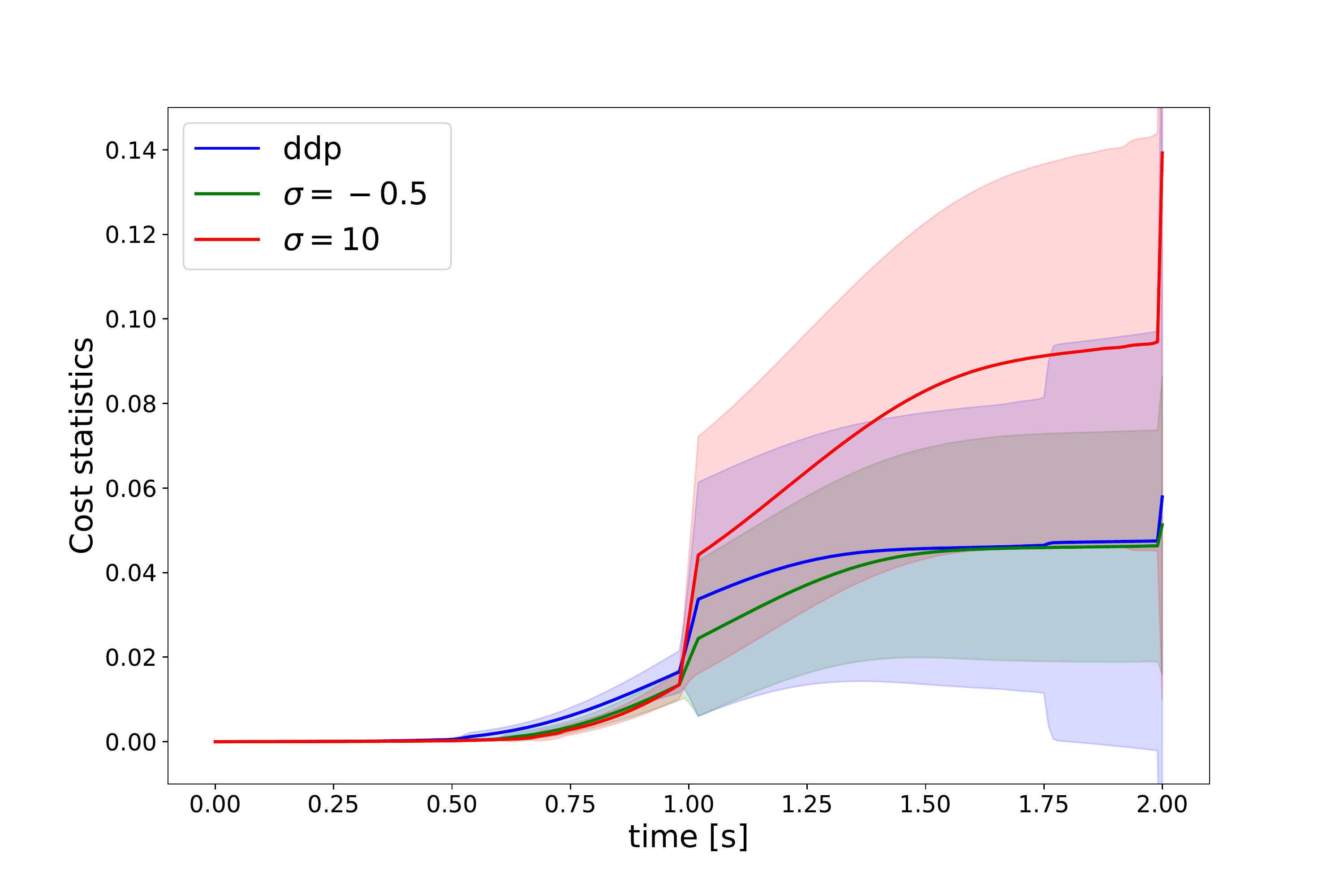}  
   \caption{Cost Statistics for DDP, Risk Averse and Risk Seeking control designs after 1000 simulations.}
   \label{fig:cost-stats}
\end{figure}

The tracking error statistics is computed with respect to the optimized plans for each of the scenarios, and presented for each of the states in Table~\ref{tab:tracking-statistics}. We see that the risk-averse design maintains the lowest average position errors and lowest standard deviations for all four states. Although the planned trajectory aims for a higher height, the actual average hip height reaches the desired hip height as shown in Fig.~\ref{fig:state-statistics}.
\begin{table}[ht]
\centering
\captionsetup{justification=centering}
\begin{tabular}{|c|c|c|c|c|c|c|} \hline
\multirow{2}{*}{State} & \multicolumn{3}{c|}{Max. Average} & \multicolumn{3}{c|}{Max. STD} \\ \cline{2-7}
   &DDP & $\sigma= -0.5$ & $\sigma= 10$ & DDP & $\sigma= -0.5$ & $\sigma= 10$ \\  \hline
\rowcolor[gray]{.9} $q^h$ & 0.354 & 0.334 & 0.410 & 0.248 & 0.230 & 0.244  \\ 
                    $q^f$ & 0.351 & 0.306 & 0.370 & 0.133 & 0.122 & 0.150 \\
\rowcolor[gray]{.9} $v^h$ & 3.83 & 3.337 & 3.135  & 1.815 & 1.652 & 1.585 \\ 
                    $v^f$ & 2.35 & 2.182 & 3.959  & 1.256 & 0.743 & 1.704 \\ \hline
\end{tabular}
\caption{Tracking Error Statistics}
\label{tab:tracking-statistics}
\vspace{-0.4cm}
\end{table}

\begin{figure}[ht]
\centering
\captionsetup{justification=centering}
\subfloat[][Hip position]{
\includegraphics[width=0.5\linewidth]{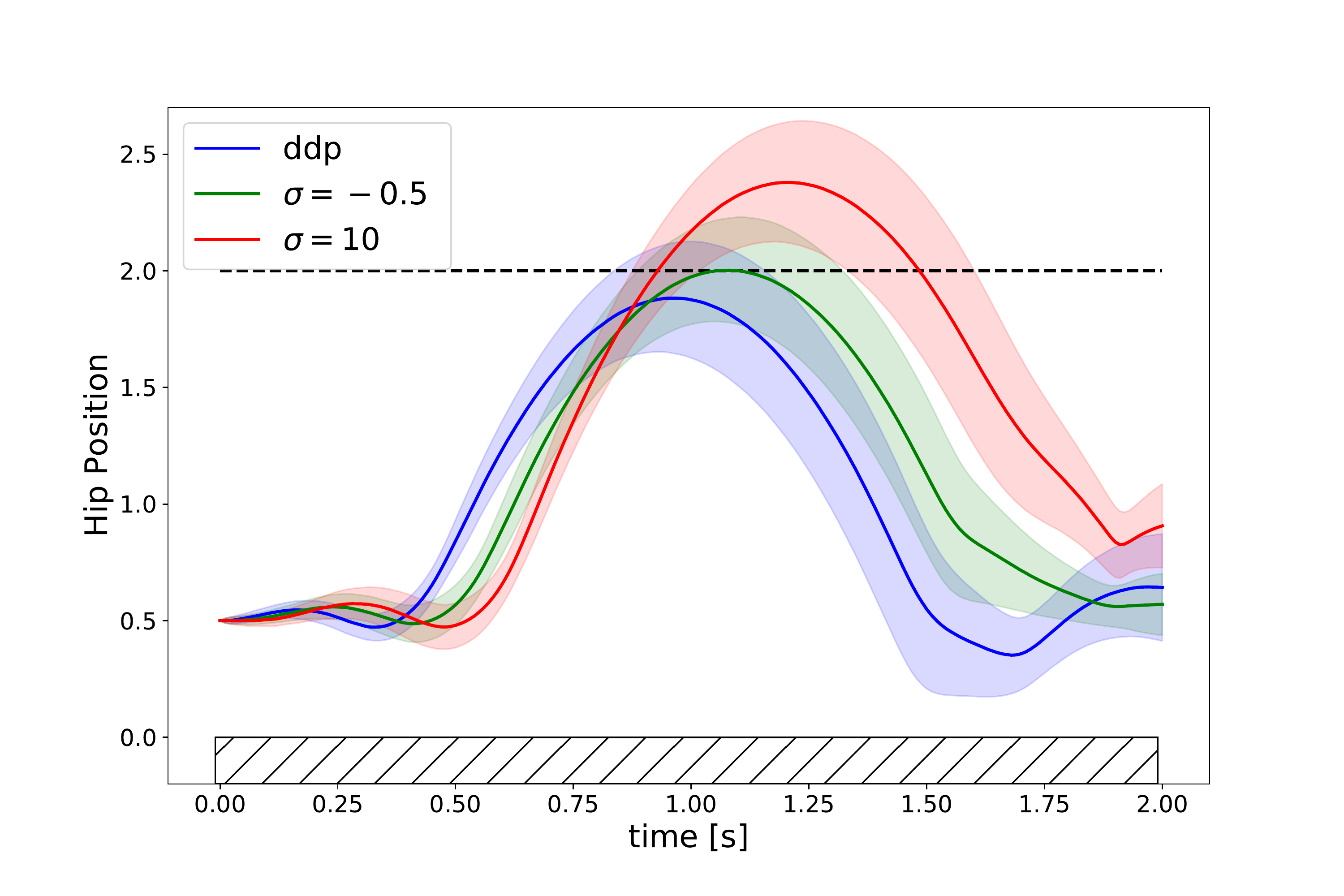}
\label{fig:hip-pos-stat}} 
\subfloat[][Foot position]{
\includegraphics[width=0.5\linewidth]{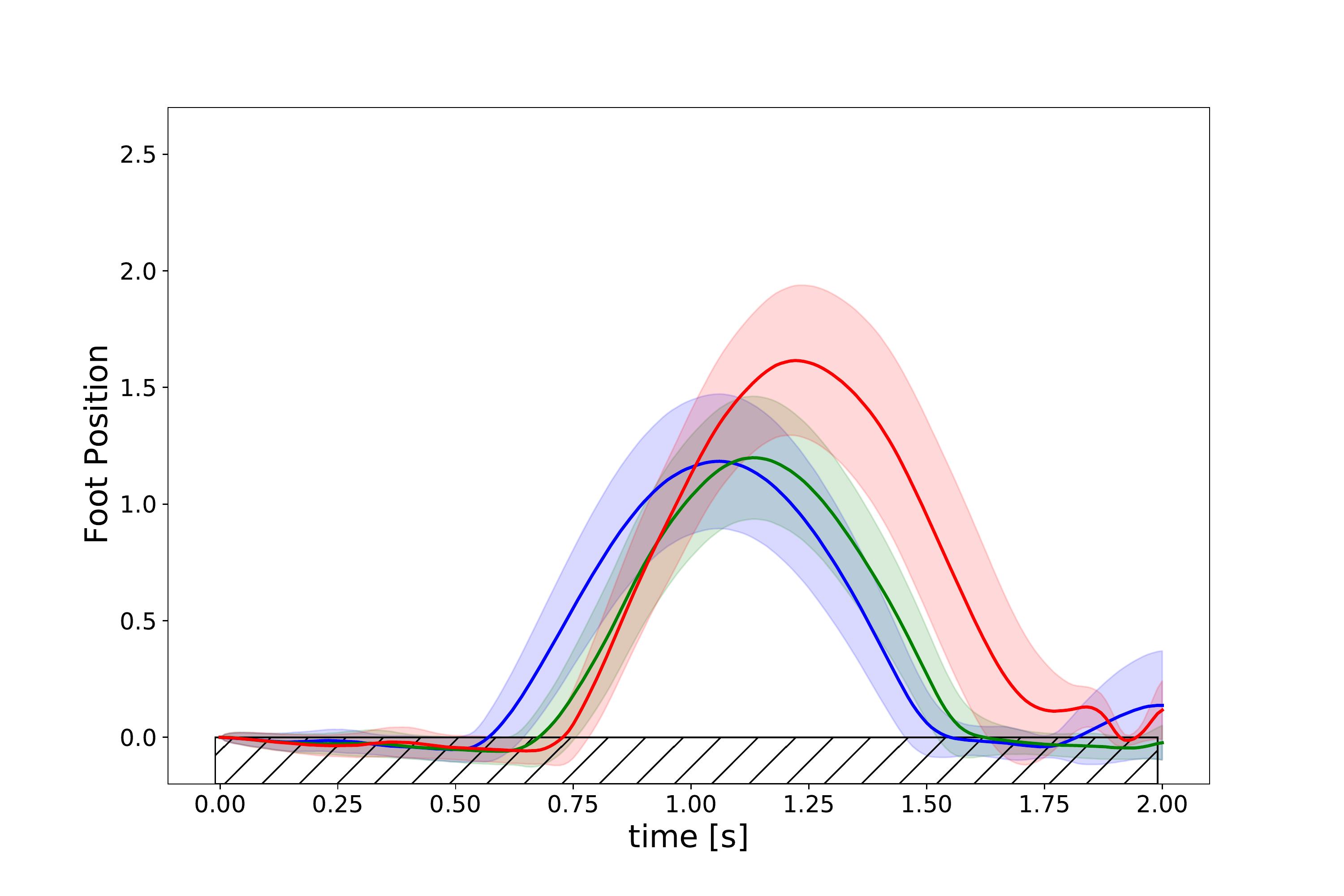}
\label{fig:foot-pos-stat}}
\caption{Hip and Foot Simulated Position Statistics}
\label{fig:state-statistics}
\vspace{-0.3cm}
\end{figure}

On the other hand, DDP comes short from the desired height, and experiences a foot bounce off the ground more often as shown in Fig.~\ref{fig:foot-pos-stat}. The risk seeking design achieves a higher height than desired, much later than planned and also exhibits a foot bouncing back off the ground (Fig. ~\ref{fig:state-statistics}). 


Another interesting aspect is the maximum average forces generated. Even though the forces are not explicit states that are penalized in the cost, the force plays a crucial role in taking off, reaching the desired height and landing safely. DDP generates the lowest takeoff force, which explains the lower average peak height (Table~\ref{tab:maximum-average}). For the landing phase DDP generates the highest impact force (Fig.~\ref{fig:average-force}) while risk seeking generates the lowest, but then diverges. Risk averse generates the most stable contact force profiles, a highly sought behavior for systems establishing and breaking contact with the environment instead of avoiding it.

\begin{table}[ht]
\centering
\captionsetup{justification=centering}
\begin{tabular}{|c|c|c|c|c|} \hline
     & Takeoff $f$ & Landing $f$ & Hip Height & Foot Height\\ \hline
\rowcolor[gray]{.9}            DDP  & 118.099 & 106.902 & 1.888 & 0.698 \\ 
                    $\sigma = -0.5$ & 106.146 & 100.527 & 2.007 & 0.801 \\ 
\rowcolor[gray]{.9} $\sigma = 10.$  & 103.712 & 330.583 & 2.380 & 0.816 \\ \hline
\end{tabular}
\caption{Maximum average force at takeoff and landing, hip height and foot height}
\label{tab:maximum-average}
\vspace{-0.4cm}
\end{table}

\begin{figure}[!tbp]
   \centering
   \includegraphics[width=0.5\linewidth]{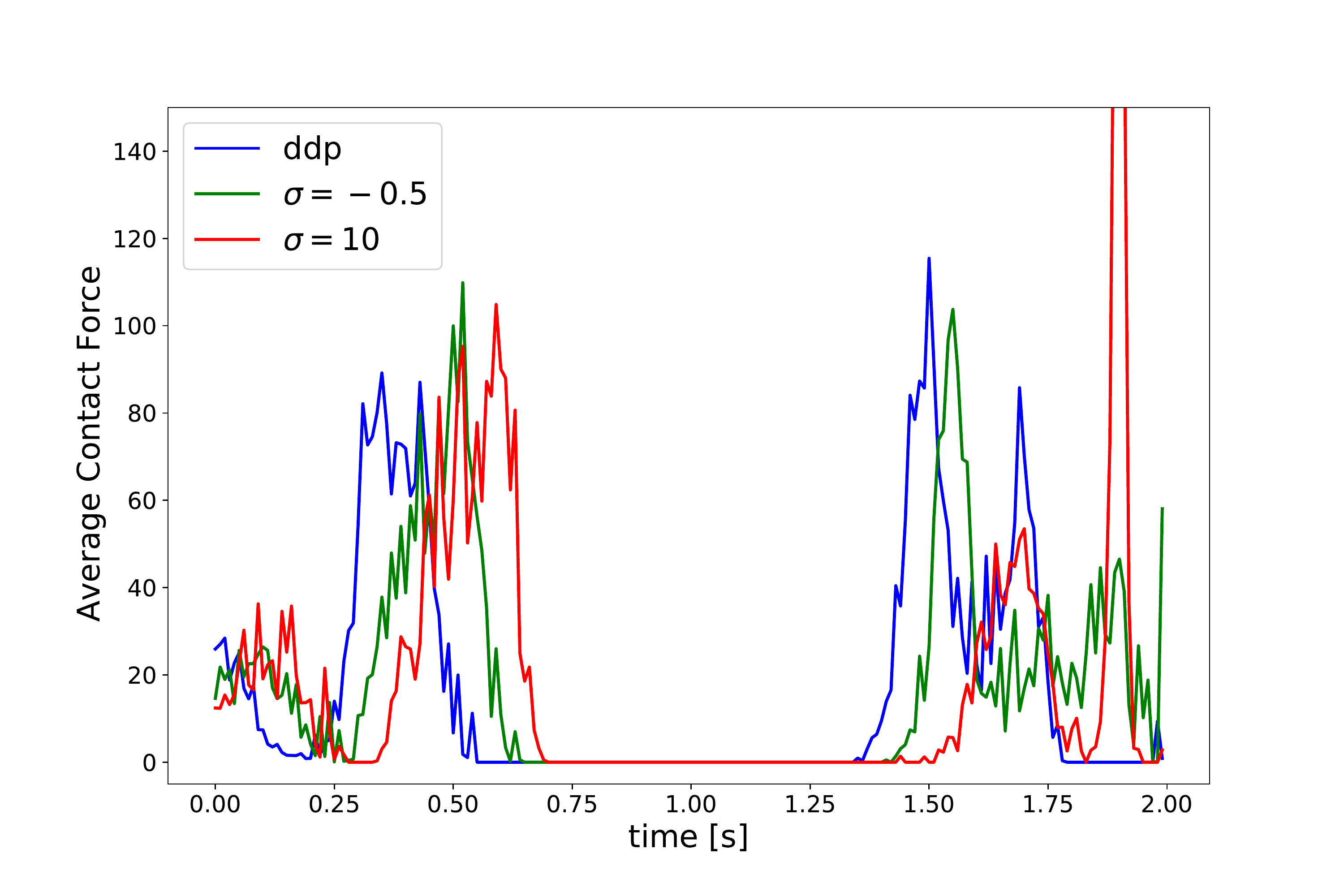} 
   \caption{Average contact force}
   \label{fig:average-force}
\end{figure}

\section{Conclusion}
In this paper we present, to our knowledge, the first computationally efficient iterative algorithm to compute risk-sensitive optimal trajectories and control policies that handle both process and measurement noise. The plan is obtained before any observations are collected, as observations become available, an estimator that takes into account the cost accumulated generates state estimates that are then used to update both the control signal and the feedback controls. We tested the proposed algorithm on a one dimensional hopper in intermittent contact with the environment, subject to both model disturbances and measurement noise. In the risk averse scenario the proposed algorithm proves to be superior to its deterministic counter part namely DDP.


\bibliographystyle{IEEEtran}
\bibliography{references}

\begin{thebibliography}{10}
\providecommand{\url}[1]{#1}
\csname url@samestyle\endcsname
\providecommand{\newblock}{\relax}
\providecommand{\bibinfo}[2]{#2}
\providecommand{\BIBentrySTDinterwordspacing}{\spaceskip=0pt\relax}
\providecommand{\BIBentryALTinterwordstretchfactor}{4}
\providecommand{\BIBentryALTinterwordspacing}{\spaceskip=\fontdimen2\font plus
\BIBentryALTinterwordstretchfactor\fontdimen3\font minus
  \fontdimen4\font\relax}
\providecommand{\BIBforeignlanguage}[2]{{%
\expandafter\ifx\csname l@#1\endcsname\relax
\typeout{** WARNING: IEEEtran.bst: No hyphenation pattern has been}%
\typeout{** loaded for the language `#1'. Using the pattern for}%
\typeout{** the default language instead.}%
\else
\language=\csname l@#1\endcsname
\fi
#2}}
\providecommand{\BIBdecl}{\relax}
\BIBdecl

\bibitem{sideris2005efficient}
A.~Sideris and J.~E. Bobrow,
  ``\href{https://ieeexplore.ieee.org/abstract/document/1470308}{An efficient
  sequential linear quadratic algorithm for solving nonlinear optimal control
  problems},'' in \emph{Proceedings of the 2005, American Control Conference,
  2005.}\hskip 1em plus 0.5em minus 0.4em\relax IEEE, 2005, pp. 2275--2280.

\bibitem{todorov2005generalized}
E.~Todorov and W.~Li, ``A generalized iterative lqg method for locally-optimal
  feedback control of constrained nonlinear stochastic systems,'' in
  \emph{Proceedings of the 2005, American Control Conference, 2005.}\hskip 1em
  plus 0.5em minus 0.4em\relax IEEE, 2005, pp. 300--306.

\bibitem{arrow1971theory}
K.~J. Arrow, ``The theory of risk aversion,'' \emph{Essays in the theory of
  risk-bearing}, pp. 90--120, 1971.

\bibitem{pratt1978risk}
J.~W. Pratt,
  ``\href{https://www.sciencedirect.com/science/article/pii/B9780122148507500103}{Risk
  aversion in the small and in the large},'' in \emph{Uncertainty in
  economics}.\hskip 1em plus 0.5em minus 0.4em\relax Elsevier, 1978, pp.
  59--79.

\bibitem{hammoud2021impedance}
B.~Hammoud, M.~Khadiv, and L.~Righetti, ``Impedance optimization for uncertain
  contact interactions through risk sensitive optimal control,'' \emph{IEEE
  Robotics and Automation Letters}, vol.~6, no.~3, pp. 4766--4773, 2021.

\bibitem{majumdar2020should}
A.~Majumdar and M.~Pavone, ``How should a robot assess risk? towards an
  axiomatic theory of risk in robotics,'' in \emph{Robotics Research}.\hskip
  1em plus 0.5em minus 0.4em\relax Springer, 2020, pp. 75--84.

\bibitem{Bechtle_Lin_Rai_Righetti_Meier_2019}
S.~Bechtle, Y.~Lin, A.~Rai, L.~Righetti, and F.~Meier, ``Curious ilqr:
  Resolving uncertainty in model-based rl,'' in \emph{Proceedings of the
  Conference on Robot Learning}, ser. Proceedings of Machine Learning Research,
  vol. 100, Nov 2019, p. 162–171.

\bibitem{bryson2018applied}
A.~E. Bryson and Y.-C. Ho, \emph{Applied optimal control: optimization,
  estimation, and control}.\hskip 1em plus 0.5em minus 0.4em\relax Routledge,
  2018.

\bibitem{jacobson1973optimal}
D.~Jacobson, ``Optimal stochastic linear systems with exponential performance
  criteria and their relation to deterministic differential games,'' \emph{IEEE
  Transactions on Automatic control}, vol.~18, no.~2, pp. 124--131, 1973.

\bibitem{singh2018framework}
S.~Singh, Y.~Chow, A.~Majumdar, and M.~Pavone, ``A framework for
  time-consistent, risk-sensitive model predictive control: Theory and
  algorithms,'' \emph{IEEE Transactions on Automatic Control}, vol.~64, no.~7,
  pp. 2905--2912, 2018.

\bibitem{tsiamis2020risk}
A.~Tsiamis, D.~S. Kalogerias, L.~F. Chamon, A.~Ribeiro, and G.~J. Pappas,
  ``Risk-constrained linear-quadratic regulators,'' in \emph{2020 59th IEEE
  Conference on Decision and Control (CDC)}.\hskip 1em plus 0.5em minus
  0.4em\relax IEEE, 2020, pp. 3040--3047.

\bibitem{speyer1974optimization}
J.~Speyer, J.~Deyst, and D.~Jacobson,
  ``\href{https://ieeexplore.ieee.org/abstract/document/1100606}{Optimization
  of stochastic linear systems with additive measurement and process noise
  using exponential performance criteria},'' \emph{IEEE Transactions on
  Automatic Control}, vol.~19, no.~4, pp. 358--366, 1974.

\bibitem{whittle1981risk}
P.~Whittle,
  ``\href{https://www.jstor.org/stable/pdf/1426972.pdf}{Risk-sensitive
  linear/quadratic/Gaussian control},'' \emph{Advances in Applied Probability},
  pp. 764--777, 1981.

\bibitem{bensoussan1985optimal}
A.~Bensoussan and J.~H. van Schuppen, ``Optimal control of partially observable
  stochastic systems with an exponential-of-integral performance index,''
  \emph{SIAM Journal on Control and Optimization}, vol.~23, no.~4, pp.
  599--613, 1985.

\bibitem{james1994risk}
M.~R. James, J.~S. Baras, and R.~J. Elliott, ``Risk-sensitive control and
  dynamic games for partially observed discrete-time nonlinear systems,''
  \emph{IEEE transactions on automatic control}, vol.~39, no.~4, pp. 780--792,
  1994.

\bibitem{roulet2020convergence}
V.~Roulet, M.~Fazel, S.~Srinivasa, and Z.~Harchaoui, ``On the convergence of
  the iterative linear exponential quadratic gaussian algorithm to stationary
  points,'' in \emph{2020 American Control Conference (ACC)}.\hskip 1em plus
  0.5em minus 0.4em\relax IEEE, 2020, pp. 132--137.

\bibitem{ponton2020effects}
B.~Pont{\'o}n, S.~Schaal, and L.~Righetti, ``On the effects of measurement
  uncertainty in optimal control of contact interactions,'' in
  \emph{Algorithmic Foundations of Robotics XII}.\hskip 1em plus 0.5em minus
  0.4em\relax Springer, 2020, pp. 784--799.

\bibitem{chapman2021classical}
M.~P. Chapman and K.~M. Smith, ``Classical risk-averse control for a
  finite-horizon borel model,'' \emph{IEEE Control Systems Letters}, 2021.

\bibitem{nocedal2006numerical}
J.~Nocedal and S.~Wright, \emph{Numerical optimization}.\hskip 1em plus 0.5em
  minus 0.4em\relax Springer Science \& Business Media, 2006.

\bibitem{giftthaler2018family}
M.~Giftthaler, M.~Neunert, M.~St{\"a}uble, J.~Buchli, and M.~Diehl, ``A family
  of iterative gauss-newton shooting methods for nonlinear optimal control,''
  in \emph{2018 IEEE/RSJ International Conference on Intelligent Robots and
  Systems (IROS)}.\hskip 1em plus 0.5em minus 0.4em\relax IEEE, 2018, pp. 1--9.

\bibitem{mastalli2020crocoddyl}
C.~Mastalli, R.~Budhiraja, W.~Merkt, G.~Saurel, B.~Hammoud, M.~Naveau,
  J.~Carpentier, L.~Righetti, S.~Vijayakumar, and N.~Mansard, ``Crocoddyl: An
  efficient and versatile framework for multi-contact optimal control,'' in
  \emph{2020 IEEE International Conference on Robotics and Automation
  (ICRA)}.\hskip 1em plus 0.5em minus 0.4em\relax IEEE, 2020, pp. 2536--2542.

\bibitem{whittle1982optimization}
P.~Whittle, \emph{Optimization over time}.\hskip 1em plus 0.5em minus
  0.4em\relax John Wiley \& Sons, Inc., 1982.

\end{thebibliography}

\appendix

\subsection{Integration of Exponential of a Quadratic form.}\label{lemma:integration_eofq}

\begin{lemma} 
For a quadratic form in $x$ and $y$
\begin{align}
    Q(x,y) &=  \frac{1}{2} \begin{bmatrix} 1 \\ x \\ y\end{bmatrix}^T \begin{bmatrix} \bar{q} & q_x^T & q_y^T \\ q_x & Q_{xx} & Q_{xy} \\ q_y & Q_{yx} & Q_{yy} \end{bmatrix} \begin{bmatrix} 1\\  x\\ y\end{bmatrix}
\end{align}
We have: 
\begin{align}
     \int \exp \left(-Q(x,y)\right) dx &= \vert 2\pi Q^{-1}_{xx}\vert ^{\frac{1}{2}} \exp\left(- Q(\hat{x}, y) \right) \\
     \hat{x} = \underset{x}{\operatorname{argmin}}\ \ Q(x, y) &= -Q^{-1}_{xx} \left(Q_{xy}y + q_x\right)
\end{align}

\end{lemma}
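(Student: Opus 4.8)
The plan is to reduce the integral to a standard multivariate Gaussian integral by completing the square in the integration variable $x$. First I would expand the quadratic form and collect terms according to their dependence on $x$. Using the symmetry of the Hessian ($Q_{yx} = Q_{xy}^T$), this yields
\begin{align}
Q(x,y) = \tfrac12 x^T Q_{xx} x + x^T\!\left(Q_{xy}y + q_x\right) + r(y),
\end{align}
where $r(y) = \tfrac12 y^T Q_{yy} y + q_y^T y + \tfrac12 \bar q$ gathers everything that does not involve $x$.

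Next, assuming $Q_{xx}\succ 0$ --- the implicit hypothesis that makes the integral converge --- I would complete the square. Setting $\hat x = -Q_{xx}^{-1}(Q_{xy}y + q_x)$, one checks immediately that $\nabla_x Q(x,y) = Q_{xx}x + Q_{xy}y + q_x$ vanishes precisely at $\hat x$, so $\hat x = \operatorname{argmin}_x Q(x,y)$, matching the claimed formula. Substituting back and simplifying gives the key identity
\begin{align}
Q(x,y) = \tfrac12 (x - \hat x)^T Q_{xx} (x - \hat x) + Q(\hat x, y),
\end{align}
which I would verify by expanding the right-hand side: the quadratic piece reproduces $\tfrac12 x^T Q_{xx}x$ and the cross term $x^T(Q_{xy}y+q_x)$, while the constant-in-$x$ remainder collapses to exactly $Q(\hat x,y) = -\tfrac12(Q_{xy}y+q_x)^T Q_{xx}^{-1}(Q_{xy}y+q_x) + r(y)$.

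Finally, since $Q(\hat x,y)$ is independent of $x$, I would pull it out of the integral and change variables $z = x - \hat x$ (a translation, which preserves Lebesgue measure), so that only the centered Gaussian integral remains. Invoking the standard identity $\int_{\mathbb{R}^n}\exp\!\left(-\tfrac12 z^T A z\right)dz = |2\pi A^{-1}|^{1/2}$ valid for $A\succ 0$, with $A = Q_{xx}$, gives
\begin{align}
\int \exp\!\left(-Q(x,y)\right) dx &= \exp\!\left(-Q(\hat x, y)\right)\int \exp\!\left(-\tfrac12 (x-\hat x)^T Q_{xx} (x-\hat x)\right) dx \\
&= |2\pi Q_{xx}^{-1}|^{1/2}\,\exp\!\left(-Q(\hat x, y)\right),
\end{align}
which is the claim.

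There is no genuine obstacle here; the argument is a textbook completion of the square, and the only step requiring care is the positive-definiteness of $Q_{xx}$, which is exactly the definiteness condition on the total stress invoked in Section~\ref{sec:background} to guarantee a finite optimal cost. When $\sigma>0$ the relevant block is instead negative definite, the integral diverges, and the statement must be reinterpreted with the $\ext$ (here, maximization) operator in place of $\int$; the algebraic identity for $\hat x$ and $Q(\hat x,y)$ is unaffected, which is precisely why Whittle's extremization machinery goes through uniformly in the sign of $\sigma$.
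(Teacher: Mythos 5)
Your proof is correct and follows essentially the same route as the paper's: completing the square to obtain $Q(x,y) - Q(\hat x, y) = \tfrac{1}{2}(x-\hat x)^T Q_{xx}(x-\hat x)$ and then applying the standard Gaussian integral identity $\int \exp(-\tfrac{1}{2}z^T Q_{xx} z)\,dz = \vert 2\pi Q_{xx}^{-1}\vert^{1/2}$. Your explicit remark that $Q_{xx}\succ 0$ is needed for convergence (and that the statement must be read through the extremization operator otherwise) is a useful clarification the paper leaves implicit, but it does not change the argument.
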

\hspace{0.1cm}
\begin{proof}
We notice that:
\begin{align}
    Q(x, y) - Q(\hat x, y) &= \frac{1}{2} (x - \hat x)^T Q_{xx} (x - \hat x) 
\end{align}
Then, the following equality concludes the proof:
\begin{equation}
\int \exp\left(- \frac{1}{2} ( x - \hat{x} )^T Q_{xx}( x - \hat{x} )   \right) dx = \sqrt{\vert 2\pi Q^{-1}_{xx}\vert }
\end{equation}
\end{proof}

\subsection{main theorem}\label{app:principle_of_optimality}

\begin{theorem}

\begin{align}
    \mathbb{E}_{\pi^{\star}} \left[ \exp\left(- \sigma \mathcal{L}\right) \ \vert \ W_t   \right] &=  \frac{\alpha_t}{p(W_t)} \exp\left( - \sigma \mathcal{S}_t(W_t) \right) \label{Whittle_theorem}
\end{align}

\end{theorem}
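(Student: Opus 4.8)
The plan is to prove the identity \eqref{Whittle_theorem} by backward induction on $t$, starting from $t=T$ and descending to $t=0$, exactly mirroring the recursive structure of $\mathcal{S}_t$ given in \eqref{eqn:req_S}. The base case $t=T$ is where all the randomness in the state trajectory has been ``used up'': conditioning on $W_T$ fixes $u_{0:T-1}$ and $y_{1:T}$, so the only remaining integration is over the state trajectory $x_0,\dots,x_T$ (equivalently over $x_0$ and $\omega_{1:T}$). I would write $\mathbb{E}_{\pi^\star}[e^{-\sigma\mathcal{L}}\mid W_T]$ as the integral in \eqref{eqn:cost-expectation-definition} but with $y_{1:T}$ held fixed, pull out the normalizing constant $\kappa$, and then apply the quadratic lemma of Appendix~\ref{lemma:integration_eofq} repeatedly to integrate out the state variables one at a time. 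Each such integration replaces an integral over an exponential-of-quadratic by an extremization (a $\min$ if $\sigma<0$, a $\max$ if $\sigma>0$, because the sign of $\sigma$ flips the curvature of $-\sigma\mathcal{S}$), and contributes a determinantal prefactor; collecting these prefactors and the conditional density $p(y_{1:T}\mid\hat x_0, u_{0:T-1})=p(W_T)/p(W_{T-1}\text{-part})$ into the constant $\alpha_T/p(W_T)$ gives exactly $\tfrac{\alpha_T}{p(W_T)}e^{-\sigma\mathcal{S}_T(W_T)}$ with $\mathcal{S}_T(W_T)=\ext_{x_0,\dots,x_T}\mathcal{S}$.

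For the inductive step, assume the identity holds at time $t+1$. The key observation is the tower property combined with the fact that, given $W_t$ and a choice of $u_t$, the new observation $y_{t+1}$ is the only fresh random quantity that enters $W_{t+1}$. Writing $\mathbb{E}_{\pi^\star}[e^{-\sigma\mathcal{L}}\mid W_t] = \mathbb{E}\big[\mathbb{E}_{\pi^\star}[e^{-\sigma\mathcal{L}}\mid W_{t+1}]\;\big|\;W_t, u_t=\pi^\star_t\big]$ and substituting the inductive hypothesis, I get an integral over $y_{t+1}$ of $\tfrac{\alpha_{t+1}}{p(W_{t+1})}e^{-\sigma\mathcal{S}_{t+1}(W_{t+1})}$ against the conditional density of $y_{t+1}$. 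The factor $p(W_{t+1})$ in the denominator is $p(W_t)\,p(y_{t+1}\mid W_t, u_t)$, so the $y_{t+1}$-density cancels against it up to the marginal $p(W_t)$, leaving $\tfrac{\alpha_{t+1}}{p(W_t)}\int e^{-\sigma\mathcal{S}_{t+1}(W_{t+1})}\,dy_{t+1}$ times a Gaussian kernel in $y_{t+1}$ coming from the measurement model — which, since $\mathcal{S}_{t+1}$ is quadratic in $W_{t+1}$ and hence in $y_{t+1}$, is again an exponential-of-quadratic. Applying the quadratic lemma once more turns this integral into $\ext_{y_{t+1}}\mathcal{S}_{t+1}(W_{t+1})$ with an extra determinantal prefactor; absorbing that prefactor into $\alpha_{t+1}$ to define $\alpha_t$, and then performing the outer $\min_{u_t}$ dictated by the optimality of $\pi^\star$, yields $\tfrac{\alpha_t}{p(W_t)}e^{-\sigma\mathcal{S}_t(W_t)}$ with $\mathcal{S}_t(W_t)=\min_{u_t}\ext_{y_{t+1}}\mathcal{S}_{t+1}(W_{t+1})$, closing the induction. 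It is here that the certainty-equivalence interchange of $\min$ and $\ext$ is invoked to guarantee that $\pi^\star$ realizing the minimization is well defined (and that $-\sigma\mathcal{S}$ is negative definite in the extremized variables when $\sigma<0$, so the Gaussian integrals converge, i.e. $\sigma>\bar\sigma$).

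The main obstacle I anticipate is bookkeeping rather than conceptual: carefully tracking which variables are integrated versus extremized, ensuring the determinantal prefactors from each application of the quadratic lemma combine correctly into the claimed $\alpha_t$, and verifying that the Gaussian measurement density cancels cleanly against $p(W_{t+1})$ so that the $1/p(W_t)$ normalization propagates. A secondary subtlety is making precise that $\mathcal{S}_{t+1}$ really is jointly quadratic in its arguments — this requires carrying, as part of the induction hypothesis, not just the functional identity \eqref{Whittle_theorem} but also the structural claim that $\mathcal{S}_{t+1}(W_{t+1})$ is a quadratic form in $W_{t+1}$ — so that the quadratic lemma applies at the next step. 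Since the full argument is due to Whittle~\cite{whittle1981risk} and the detailed determinant computations are lengthy but standard, I would present the induction skeleton and the cancellation of $p(W_{t+1})$ in detail and relegate the explicit form of $\alpha_t$ to a remark or the accompanying document.
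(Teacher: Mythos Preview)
Your proposal is correct and follows essentially the same route as the paper: backward induction on $t$ with base case $t=T$ handled by integrating out $x_{0:T}$ via the quadratic lemma, and the inductive step handled by the tower/optimality recursion, the cancellation $p(y_{t+1}\mid W_t,u_t)/p(W_{t+1})=1/p(W_t)$, and one more application of the lemma in $y_{t+1}$. The only slip is the phrase ``times a Gaussian kernel in $y_{t+1}$'': after the density cancellation the integrand is exactly $e^{-\sigma\mathcal{S}_{t+1}(W_{t+1})}$ with no extra factor---the measurement-noise quadratic is already inside $\mathcal{S}_{t+1}$ through the $d_t$ terms---so just drop that clause and the argument matches the paper's verbatim.
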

\begin{proof}

According to~\cite{whittle1982optimization}, we have

\begin{align}
    & \mathbb{E}_{\pi^{\star}}  \left[ \exp\left(-  \sigma \mathcal{L}\right)  \vert W_t \right] \label{optimality_eq} \\ \nonumber
    &= \ext_{u_t} \int \mathbb{E}_{\pi^{\star}} \left[ \exp\left(-  \sigma \mathcal{L}\right) \vert W_{t+1} \right]  p(y_{t+1} | W_t) dy_{t+1} 
\end{align}

The proof is then a backward induction based on the recursion equality \eqref{optimality_eq}. We first notice that:  
\begin{align}
     \mathbb{E}_{\pi^{\star}} & \left[ \exp\left(-  \sigma \mathcal{L}\right)  \vert W_T \right]\\
     &=  \int e^{- \sigma \mathcal L} \, p(x_{0:T} | W_T) dx_{0:T} \nonumber \\
    &= \frac{1}{p(W_T)} \int e^{- \sigma \mathcal L} \, p(x_{0:T}, y_{0:T}) dx_{0:T} \nonumber \\
& = \alpha_T \exp ( -\sigma \min_{x_0, ...x_T} \mathcal S) \nonumber 
\end{align}

The last equality comes from the lemma therefore, we have:
\begin{equation}
    \alpha_T = \frac{ \vert 2\pi \sigma \frac{\partial^2 \mathcal S}{\partial^2 x_{0:T}}\vert^{\frac{1}{2}} }{p(W_T) \vert 2 \pi \chi_0 \vert ^{\frac{1}{2}} \prod_{t=1}^T \vert 2 \pi \Omega_t \vert ^{\frac{1}{2}}} 
\end{equation}

Now, assuming the Equation \ref{Whittle_theorem} for $t+1$, let's show the property at time $t$:
Thanks to \eqref{optimality_eq}, we have

\begin{align}
    \mathbb{E}_{\pi^{\star}} & \left[ \exp\left(-  \sigma \mathcal{L}\right)  \vert W_t \right] \\
    =& \ext_{u_t} \int  \frac{\alpha_{t+1}}{p(W_{t+1})} \operatorname{exp} \left[ -  \sigma ( \mathcal{S}_{t+1}(W_{t+1}) ) \right] p(y_{t+1} | W_t) dy_{t+1} \nonumber \\
    & =   \frac{\alpha_{t+1}}{p(W_{t})} \ext_{u_t}  \int \operatorname{exp} \left[ -   \sigma  \mathcal{S}_{t+1}(W_{t+1})  \right] dy_{t+1} \nonumber \\
    & =  \frac{\alpha_{t}}{p(W_{t})} \ext_{u_t}  \ext_{y_{t+1}}  \operatorname{exp} \left[ -  \sigma  \mathcal{S}_{t}(W_{t})  \right] \nonumber 
\end{align}

The last equality comes from the lemma therefore, we have:

\begin{align}
 \alpha_t = \alpha_{t+1} \bigg| 2\pi \sigma \frac{\partial^2 \mathcal \mathcal{S}_{t+1}(W_{t+1}) }{\partial^2 y_{t+1}} \bigg| ^{\frac{1}{2}}
\end{align}

\end{proof}










\end{document}